\DeclareMathOperator*{\argmin}{arg\,min} 
\crefname{hypothesis}{Hypothesis}{Hypotheses}
\crefname{fact}{Fact}{Facts}
\title{Manifold-valued function approximation from multiple tangent spaces\thanks{Submitted to the editors DATE.
\funding{This project was funded by BOF project C16/21/002 by the Internal Funds KU Leuven and FWO project G080822N. R. Vandebril is additionally supported by the Research Foundation–Flanders (Belgium), projects G0A9923N and G0B0123N. J. Van der Veken is additionally supported by the Research Foundation---Flanders (FWO) and the Fonds de la Recherche Scientifique (FNRS) under EOS Project G0I2222N. }}}
\author{Hang Wang\thanks{KU Leuven, Department of Computer Science, Celestijnenlaan 200A, B-3001 Leuven, Belgium (\email{hang.wang@kuleuven.be}).} 
\and Raf Vandebril\thanks{KU Leuven, Department of Computer Science, Celestijnenlaan 200A, B-3001 Leuven, Belgium (\email{raf.vandebril@kuleuven.be}).}
\and Joeri Van der Veken\thanks{KU Leuven, Department of Mathematics, Celestijnenlaan 200B, B-3001 Leuven, Belgium (\email{joeri.vanderveken@kuleuven.be}).}
\and Nick Vannieuwenhoven\thanks{KU Leuven, Department of Computer Science, Celestijnenlaan 200A, B-3001 Leuven, Belgium. Leuven.AI, KU Leuven Institute for AI, B-3000 Leuven, Belgium (\email{nick.vannieuwenhoven@kuleuven.be}).} }
\newcommand{\Rr}{\mathbb{R}}
\newcommand{\Mm}{\mathcal{M}}
\newcommand{\LOG}{\operatorname{Log}}
\newcommand{\dist}{\operatorname{dist}}
\begin{document}
\nolinenumbers
\maketitle

\begin{abstract}
		Approximating a manifold-valued function from samples of input-output pairs consists of modeling the relationship between an input from a vector space and an output on a Riemannian manifold. We propose a function approximation method that leverages and unifies two prior techniques: (i) approximating a pullback to the tangent space, and (ii) the Riemannian moving least squares method. The core idea of the new scheme is to combine pullbacks to multiple tangent spaces with a weighted Fr\'echet mean. The effectiveness of this approach is illustrated with numerical experiments on model problems from parametric model order reduction.
\end{abstract}
%
\begin{keywords}
Function approximation; Riemannian manifold; manifold-valued function; Fr\'echet mean; multiple tangent space model
\end{keywords}
%
\begin{MSCcodes}
65D15, 
65D40, 
65J99, 
46T20, 
53B20, 
58C25  
\end{MSCcodes}

\section{Introduction} 
	Function approximation involves approximating a function by a simpler function. Traditional methods focus on approximating functions between Euclidean spaces. However, there is growing interest in approximating functions that map from a vector space into a manifold because such \emph{manifold-valued functions} arise naturally in a variety of applications. They include among others \emph{subspace tracking} in signal processing for dynamic direction of arrival estimation and beamforming \cite{MVV1992,Yang1995,Rabideau1996,Delmas_2010} where the function maps into the Grassmannian; \emph{parametric model order reduction} based on (quasi-)interpolation of projections \cite{benner2015pmor,Amsallem2011ipmor,zimmermann2021manifold} where the functions also map into the Grassmannian; \emph{diffusion tensor imaging} in medical imaging applications \cite{Shi09,Lin17} where functions map into the manifold of symmetric positive-definite matrices; \emph{dynamic low-rank approximation} of solutions of partial differential equations \cite{KL2007,NL2008,zimmermann2020herimite,CL2021,CL2023,seguin2024hermite} where the functions map into manifolds of low-rank matrices or tensors; and stiffness matrix prediction for additive manufacturing simulations \cite{DVLM2022} where the function maps into the manifold of low-rank positive semidefinite matrices.
	These applications show that common reasons for approximating a possibly implicit function from samples by an explicit surrogate function include reducing the cost of evaluation, reducing the cost of storing the function, and discovering an explicit description of an implicit function given through samples.
	
	This paper seeks to approximate the---possibly unknown---function
	\begin{equation} \label{eqn_f}
	f: \Rr^n \supset \Omega \longrightarrow \Mm{} \subset \Rr^m, 
	\end{equation}
	where $\Omega$ is open and $\Mm{}$ is an embedded \emph{Riemannian submanifold} of $\Rr^m$, oftentimes equipped with the Euclidean metric from $\Rr^m$ (see \cref{sec_background} below for the background).
	Our aim is finding a function $\widehat{f} : \Omega \to \Mm{}$ that approximates $f$ from $N\in\mathbb{N}$ samples
	\begin{equation} \label{eqn_samples}
	 S := \{ (x_i,y_i) \in \Omega \times \Mm{} \mid i = 1, \dots, N \}.
	\end{equation}
	We make no assumptions on which samples $S$ are provided to us: they could be (i) a fixed set of known input-output pairs as is common in machine learning; (ii) queried by the user from a given, usually expensive-to-evaluate function $f$; or (iii) a combination thereof. The techniques we consider in this paper assume that a set of samples $S$ is given; we do not investigate how to choose the samples if we have this liberty.
	Moreover, for simplicity, we assume that the samples $y_i$ lie exactly on the manifold $\Mm{}$. If this is not the case, then the proposed techniques can still be employed either by (i) projecting the output samples to the manifold $\Mm{}$, or (ii) using the \emph{smooth extension} of the \emph{logarithmic map} from $\Mm{}$ to the ambient $\Rr^m$.
	
	The main challenge of this approximation problem is that the function spaces over $\Omega$ are not linear due to the nonlinearity of $\Mm{}$. This causes difficulty in generalizing classic function approximation methods.
	Nonetheless, there are two main strategies for solving manifold-valued function approximation in the literature: one is to \emph{linearize the problem} locally, as in \cite{Amsallem2011ipmor,simon2024approx,ralf2023approx,zimmermann2021manifold} and \cite[Section 4]{Zimmermann2024multihermi}, while the other is to compute a weighted \emph{Riemannian center of mass}, \emph{Karcher mean} \cite{karcher}, or \emph{Fr\'echet mean} \cite{Frechet1948} of the function values at sample points, as in \cite{Grohs17,sharon2023mrmls,sander2012geo,sander2016geo} and \cite[Section 3]{Zimmermann2024multihermi}.
	
	Linearization constitutes an effective methodology for reducing a manifold-valued function approximation problem to a classic vector-valued function approximation problem by pulling back the approximation problem to a single tangent space, as elucidated in \cite{Amsallem2011ipmor,simon2024approx,zimmermann2020herimite}.
	We refer to this technique as a \emph{single tangent space model} (STSM); it is described in more detail in \cref{sec_1tsm}. For now it suffices to clarify that STSM encompasses two principal stages. The first stage involves pulling back the outputs $\{y_1,\dots,y_N\} \subset \Mm{}$ to a single \emph{tangent space} $T_{p^*}\Mm{}$, where $p^*$ is a designated \emph{anchor point}, which could be, for example, a randomly selected output from $\{y_1,\dots,y_N\}$ or their Fr\'echet mean.
	This pullback usually takes the form of the logarithmic map $\operatorname{Log}_{p^*} : \Mm{} \to T_{p^*} \Mm{}$; see \cref{subsec:Rie_basis}.\footnote{Throughout this paper, we will work with the exponential map and its inverse, primarily so that we can rely on standard concepts from differential geometry. For many matrix manifolds in their natural geometries, the exponential map and its inverse can be efficiently computed \cite{manoptAb,mnoptbo}. However, in the context of Riemannian optimization \cite{manoptAb,mnoptbo} it is common to replace the exponential map by a computationally cheaper approximation thereof, which is called a \emph{retraction}. We believe that most of the results of this paper can be generalized straightforwardly to retractions as well.}
	The second stage entails approximating the function $g: \Rr^n \rightarrow T_{p^*}\Mm{}$ from the pullback samples $\{ (x_i, \mathrm{Log}_{p^*}y_i) \}_{i=1}^N \subset \Mm{} \times T_{p^*}\Mm{}$. This is an approximation problem between vector spaces so standard function approximation schemes can be used, such as multivariate Hermite interpolation as in \cite[Section~4]{Zimmermann2024multihermi} and tensorized Chebyshev interpolation as in \cite{simon2024approx}. STSM is well defined only when all points $\{ y_1, \dots, y_N \}$ lie within the injectivity radius of the point $p^*$.
	
	The approximation models from \cite{Grohs2012,Grohs17,sharon2023mrmls,sander2012geo,Zimmermann2024multihermi,sander2016geo} utilize a weighted Fr\'echet mean as the basis of an approximation of $f$.
	They can be thought of as generalizing the linear \emph{moving least-squares method} \cite{wendland04} that approximates a function $f(x)$ between vector spaces by a linear combination of the outputs as $\widehat{f}(x) := \sum_{i=1}^N \phi_i(x) y_i$, where $\phi_i$ are suitable weight functions, to the setting where the outputs $y_i$ live on a Riemannian manifold. In these methods, proposed originally in \cite{Grohs2012,Grohs17}, the linear combination is substituted by a weighted Fr\'echet mean, resulting in the \emph{Riemannian moving least squares} (RMLS) method.
	Recently, Sharon, Cohen, and Wendland \cite{sharon2023mrmls} proposed a multi-scale extension of RMLS, called MRMLS, which approximates the error of the previous scale with higher resolution samples using RMLS.
	A different extension was proposed by Zimmermann and Bergmann \cite{Zimmermann2024multihermi}; they introduced the barycentric Hermite interpolation method that additionally interpolates the derivatives when computing the weighted Fr\'echet mean.
	
	Comparing STSM and RMLS, we note that the main advantage of STSM is its computational advantage over RMLS for evaluating the approximated function (the \emph{online stage}). The reason is that RMLS requires computing a weighted Fr\'echet mean involving all sample points with non-zero weights. As no closed expression is known in general for the Fr\'echet mean, this involves using specialized approximation \cite{JVV2012} or general Riemannian optimization \cite{manoptAb,mnoptbo} methods, which require evaluating the manifold's exponential map a number of times that is proportional to the number of nonzero weights. STSM requires but one evaluation of the exponential map during the online stage.
	The computational advantage of STSM during the online stage is paid for during the construction of the model (the \emph{offline stage}): RMLS has no set up cost, while STSM needs to pull back all output samples to the designated tangent space $T_{p^*} \Mm{}$ and then solve a traditional multivariate approximation problem there.
	
	\subsection*{Main contribution}
	To address the limitation of locality in STSM and the high computational cost of the online stage of (multilevel) RMLS, we propose a new approach that unifies STSM and RMLS. We call it the \emph{multiple tangent spaces model} (MTSM). Our scheme utilizes multiple STSMs and combines their predictions with a weighted Fr\'echet mean. It can be thought of as replacing dense clusters of output samples by a single STSM. Hereby, MTSM can circumvent the locality restriction of STSM while significantly reducing the online computational cost of employing RMLS. MTSM trades off these advantages for a more costly offline stage.

	The essential ingredients of MTSM are the following:
	\begin{enumerate}
		\item Choose appropriate anchor points $p^*_1,\dots,p^*_R$ on the manifold $\Mm{}$;
		\item pull $f$ back to the tangent spaces $T_{p_j^*} \Mm{}$, $j\in\{1,\dots,R\}$, where possible, and approximate the vector-valued maps $g_j = \mathrm{Log}_{p^*_j} \circ f$ by $\widehat{g}_{j}$ using classic function approximation methods;
		\item push each $\widehat{g}_{j}$ forward with $\mathrm{Exp}_{p^*_j}$ and compute a weighted Fr\'echet mean of their predictions.
	\end{enumerate}
	\Cref{prop:error_RTSM} presents an error bound of MTSM and \cref{sec:alg} provides the algorithms for its online and offline stages.

	\subsection*{Outline}
	This paper is organized as follows. In \cref{sec_background}, we recall the relevant terminology from differential geometry and basic properties of weighted Fr\'echet means. In \cref{sec:3}, we review the two main manifold-valued function approximation methods in the literature that MTSM generalizes. In \cref{sec:R-TSM}, we present the manifold-valued function approximation framework MTSM and describe its smoothness, error analysis, and required number of tangent spaces. Thereafter, in \cref{sec:alg}, the algorithms to build and evaluate MTSM are proposed. In \cref{sec:Numer_exp}, we present numerical experiments with two basic approximation problems from \cite{sharon2023mrmls} and \cite{Zimmermann2024multihermi}. Moreover, we employ MTSM for parametric model order reduction on the two well-known benchmark problems Anemometer and Microthruster, using psssMOR \cite{psssmor} and Manopt \cite{manopt}. We conclude with a summary of the key findings and an outlook on potential future research in \cref{sec:conc}.
	
	\section{Background and notation}\label{sec_background}
	We give an overview of the required background material and hereby also fix notation that will be used throughout the paper.
	
	\subsection{Riemannian geometry}\label{subsec:Rie_basis}
	In this subsection, we review standard concepts from Riemannian geometry that we use throughout this paper. We refer to \cref{fig:TM1} for a visualization of some of them. Full technical details can be found in~\cite{lee2003smooth,lee2018rie}.

	\begin{figure}[tb]
		\begin{center}
			\includegraphics{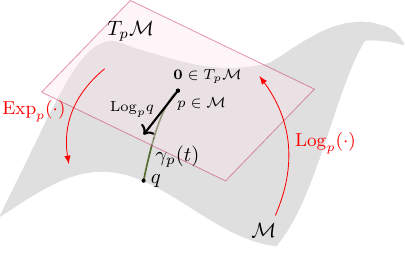}
			\caption{A diagram of key geometric concepts, like the tangent space $T_p\Mm{}$ at $p \in \Mm{}$, the exponential map $\mathrm{Exp}_p$, the logarithmic map $\mathrm{Log}_p$, and a geodesic curve $\gamma_p(t)$.}\label{fig:TM1}
		\end{center}
	\end{figure}

	A $d$-dimensional \emph{topological manifold} is a space where each point has a neighborhood that maps homeomorphically (a continuous bijection with a continuous inverse) to an open subset of $\Rr^d$.
	A \emph{smooth manifold} is, informally, a topological manifold whose transition maps between these various neighborhoods are smooth.
	The \emph{tangent space} $T_p \Mm{}$ at a point $p \in \Mm{}$ is the $d$-dimensional vector space of velocity vectors of smooth curves passing through $p$.
	
	A \emph{Riemannian manifold} $(\Mm{}, \mathbf{g})$ is a smooth manifold that is equipped with a Riemannian metric $\mathbf{g}$. Throughout this paper, we assume a metric has been fixed, and for brevity we refer to $\Mm{}$ as a Riemannian manifold.
	A \emph{Riemannian metric} $\mathbf{g}$ provides a smoothly varying inner product $\mathbf{g}_p: T_p \Mm{} \times T_p \Mm{} \rightarrow \Rr$ on the tangent space at $p \in\Mm{}$. The norm induced by this inner product at $p$ is denoted by $\| \cdot \|_p$.\footnote{This notation is not to be confused with a common notation for the $p$-norm, which is nowhere used in this paper.}
	The \emph{geodesic distance} between two points $p, q \in \Mm{}$ is
	\[
	\operatorname{dist}_\Mm{}(p,q) := \operatorname{inf}\{L(\gamma) \mid \gamma: [0,1] \rightarrow \Mm{} \ \text{piecewise smooth},\gamma(0)=p, \gamma(1)=q\},
	\]
	where $L(\gamma) := \int_{0}^{1} \| \gamma'(t) \|_{\gamma(t)} \mathrm{d}t$ is the \emph{length} of the curve $\gamma$ and $\gamma'(t) \in T_{\gamma(t)} \Mm{}$ is the derivative of $\gamma$ at $t \in (0,1)$. If no curves exist between $p$ and $q$, then $\dist_{\Mm}(p,q)=\infty$.
	
	A \emph{geodesic curve} is a curve $\gamma(t): \Rr \supset [0,1] \rightarrow \Mm{}$ that locally minimizes the length of the path between its endpoints, i.e., there exists a supremum $a'\in (0,1]$ such that $\gamma|_{[0,a']}$ is a \emph{length-minimizing geodesic} in the sense that $\operatorname{dist}_{\Mm{}}(\gamma(0), \gamma(t)) = L(\gamma|_{[0,t]})$ for all $0 \le t \le a'$.
	Geodesic curves starting in $p \in \Mm{}$ can be parameterized by a tangent vector in the tangent space at $p$ via the \emph{exponential map} $\mathrm{Exp}_{p} : D \rightarrow \Mm{}$, for some open $D \subset T_{p} \Mm{}$. It takes a tangent vector $v \in D$ at $p\in\Mm$ and maps it to $\gamma(\|v\|_p)$, where $\gamma$ is the unit-speed geodesic with $\gamma(0)=p$ and $\gamma'(0)=t$.
	
	The \emph{injectivity radius} $\operatorname{inj}(p) > 0$ at $p \in \Mm{}$ is the supremum over $\sigma \in \Rr$ such that
	\[
	\mathrm{Exp}_p : B_{\sigma}(p) \longrightarrow \mathcal{B}_{\sigma}(p) \subset \Mm{}
	\]
	is a diffeomorphism (a smooth bijective map with a smooth inverse), where
	\[
	B_{\sigma}(p) := \{ v \in T_p\Mm{} \mid \| v\|_p < \sigma \}
	\quad\text{and}\quad
	\mathcal{B}_{\sigma}(p) := \mathrm{Exp}_p( B_{\sigma}(p) )
	\]
	are the balls of radius $\sigma$ around, respectively, $0 \in T_p \Mm{}$ and $p \in \Mm{}$.
	If $\sigma \le \operatorname{inj}(p)$, then $\mathrm{Log}_{p} := \mathrm{Exp}_p|_{B_\sigma(p)}^{-1}$ is called the \emph{logarithmic map} and both $B_\sigma(p)$ and $\mathcal{B}_\sigma(p)$ are called \emph{geodesic} balls.
	The logarithmic map takes $q \in \mathcal{B}_\sigma(p)$ to the tangent vector at $p$ of the unique geodesic curve connecting $p$ and $q$.
	
	The curvature of a Riemannian manifold can be quantified as detailed in \cite{gallot1990rie}. 
	Let $\Sigma_p$ be a $2$-dimensional subspace of $T_p\Mm{}$, let $C_{\Sigma_p}(r)$ denote the image under the exponential map at $p$ of the unit circle in $\Sigma_p$ for sufficiently small $r>0$, and let $\ell_{\Sigma_p}(r)$ be the length of $C_{\Sigma_p}(r)$. Then, the \emph{sectional curvature} $\mathrm{sec}(\Sigma_p) \in \Rr$ can be defined as
	\(
	 \mathrm{sec}(\Sigma_p) = \left.\frac{1}{2\pi} \frac{\mathrm{d}^3}{\mathrm{d}r^3} \right\vert_{r=0} \ell_{\Sigma_p}(r).
	\)
	
	We will frequently require suitable sectional curvature bounds in this paper. For this reason, we introduce the next notations for the ``clipped'' infimum and supremum of the sectional curvatures on a manifold $\Mm$:
	\[   
	 \kappa_\ell^-(\Mm) := \min\Biggl\{0, \inf_{\substack{p \in \Mm,\\ \Sigma_p \subset T_p \Mm}} \mathrm{sec}(\Sigma_p) \Biggr\}
	 \text{ and }
	 \kappa_u^+(\Mm) := \max\Biggl\{0, \sup_{\substack{p \in \Mm,\\ \Sigma_p \subset T_p \Mm}} \mathrm{sec}(\Sigma_p) \Biggr\},
	\]
	where $\Sigma_p$ is a $2$-dimensional linear subspace of $T_p \Mm$. This definition ensures that $\kappa_\ell^-(\Mm) \le 0 \le \kappa_u^+(\Mm)$.

	\subsection{Fr\'echet mean}
	In this subsection, we recall the definition and properties of a Fr\'echet mean, which forms the basis of the RMLS method.
	
	The $d$-dimensional \emph{standard simplex} is
	\[
	\Delta^d=\left\{ (w_1,\dots,w_{d+1}) \in \Rr^{d+1} \mid  0 \leq w_1,\ldots,w_{d+1} \le 1 \ \text{and} \ \sum_{i=1}^{d+1} w_i=1\right\},
	\]
	and, in a slight abuse of notation, we will use $\Phi \in \Delta^d$ also for functions $\Phi: \Rr \rightarrow \Delta^d$.
	For weights $\Phi=(\varphi_1,\dots,\varphi_d) \in \Delta^{d-1}$ and data points $Y=(y_1,\dots,y_{d}) \in \Mm{}^{\times d} := \Mm{}\times\dots\times\Mm{}$, the \emph{Fr\'echet mean} \cite{karcher,Frechet1948} is
		\begin{equation}\label{def_karcher_mean}
			\mathrm{avg}_{\Mm{}}: \Mm{}^{\times d} \times \Delta^{d-1} \longrightarrow \Mm{}, \qquad
			( Y,\Phi) \longmapsto \underset{p \in \Mm{}}{\operatorname{argmin}} \sum_{i=1}^{d} \varphi_i \cdot \dist_\Mm{}\left(p, y_i\right)^2.
		\end{equation}
	
	There could be multiple solutions of this minimization problem. However, under the following sufficient condition, a unique minimizer exists.

	\begin{theorem}[Uniqueness \protect{\cite[Theorem 2.1 with $p=2$ and Remark 2.5]{Afsari2011}}]\label{thm:well_karcher}
		If there exists a point $p \in \Mm$ and a radius 
		\[      
		 \sigma \le \frac{1}{2} \min\Biggl\{ \mathrm{inj}( \mathcal{N}_\sigma ), \frac{\pi}{\sqrt{\kappa_u^+( \mathcal{N}_\sigma )}} \Biggr\},
		\]
		with $\mathcal{N}_\sigma$ equal to $\Mm$ or $\mathcal{B}_{2\sigma}(p)$,
		such that $y_1, \dots, y_{d} \in \mathcal{B}_{\sigma}(p)$, then the minimization in the definition of $\mathrm{avg}_{\Mm{}}$ in \cref{def_karcher_mean} has a globally unique solution for all $\Phi \in \Delta^{d-1}$.
	\end{theorem}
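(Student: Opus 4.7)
The plan is to adapt the classical Karcher--Kendall--Afsari convexity argument for the uniqueness of the Riemannian center of mass. Define the weighted cost
\[
F(q) := \sum_{i=1}^{d} \varphi_i \, \operatorname{dist}_\Mm{}(q, y_i)^2, \qquad q \in \mathcal{N}_\sigma,
\]
and abbreviate $r^* := \tfrac{1}{2}\min\{\operatorname{inj}(\mathcal{N}_\sigma), \pi/\sqrt{\kappa_u^+(\mathcal{N}_\sigma)}\}$, so that the hypothesis reads $\sigma \le r^*$ with $y_1,\dots,y_d \in \mathcal{B}_\sigma(p)$.

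First I would establish \textbf{existence} of a global minimizer of $F$ and confine it to $\mathcal{B}_\sigma(p)$. Since $\sigma \le \operatorname{inj}(\mathcal{N}_\sigma)/2$, the closed ball $\overline{\mathcal{B}_\sigma(p)}$ is compact as a diffeomorphic image under $\operatorname{Exp}_p$ of a closed Euclidean ball in $T_p\Mm{}$, and continuity of $F$ yields a minimizer $q^*$ on this compact set. A standard triangle-inequality comparison, together with the bound $F(p) < \sigma^2$, then shows that candidate minimizers in $\mathcal{N}_\sigma \setminus \mathcal{B}_\sigma(p)$ are strictly worse, so $q^* \in \mathcal{B}_\sigma(p)$.

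Second, and this is the crux, I would prove \textbf{strict geodesic convexity} of $F$ on $\mathcal{B}_\sigma(p)$. Two ingredients are required: (i) the ball $\mathcal{B}_\sigma(p)$ is strongly geodesically convex, so that any two of its points are joined by a unique length-minimizing geodesic $\gamma$ staying inside $\mathcal{B}_{2\sigma}(p) \subset \mathcal{N}_\sigma$; and (ii) for each $y_i$, the squared distance $t \mapsto \operatorname{dist}_\Mm{}(\gamma(t), y_i)^2$ is strictly convex. Claim (ii) follows from the Hessian comparison theorem (a consequence of the Rauch/Toponogov comparison for sectional curvature bounded above by $K := \kappa_u^+(\mathcal{N}_\sigma)$): along a unit-speed geodesic $\gamma$, provided $\operatorname{dist}_\Mm{}(\gamma(t), y_i)$ stays strictly below $\pi/\sqrt{K}$ and within the injectivity radius of $y_i$,
\[
\frac{d^2}{dt^2} \operatorname{dist}_\Mm{}(\gamma(t), y_i)^2 \;\ge\; 2\, h_K\!\bigl(\operatorname{dist}_\Mm{}(\gamma(t), y_i)\bigr) \;>\; 0,
\]
where $h_K(r) := r\sqrt{K}\cot(r\sqrt{K})$, with the convention $h_0 \equiv 1$ in the flat case $K = 0$. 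The assumption $\sigma \le r^*$ is tailored exactly so that all distances appearing in this estimate, bounded by $2\sigma$, remain below the critical threshold where $h_K$ is strictly positive. Taking a convex combination with the nonnegative weights $\varphi_i$ summing to one preserves the strict positivity, so $F\circ\gamma$ is strictly convex on every nontrivial geodesic inside $\mathcal{B}_\sigma(p)$. Uniqueness then follows immediately: a strictly geodesically convex function on a geodesically convex open set has at most one minimizer.

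The main technical obstacle is verifying that the explicit numerical bound $\sigma \le \tfrac{1}{2}\min\{\operatorname{inj}(\mathcal{N}_\sigma), \pi/\sqrt{K}\}$ simultaneously enforces (a) the geodesic convexity of $\mathcal{B}_\sigma(p)$ and (b) the strict convexity of each $\operatorname{dist}_\Mm{}(\,\cdot\,, y_i)^2$ along every geodesic in the ball. Both constraints arise from the Rauch comparison framework, and the two factors of $\tfrac{1}{2}$ ultimately trace back to controlling the ball's diameter $2\sigma$ so that all comparison triangles under consideration remain inside the valid regime; the careful execution of this bookkeeping is the substance of Afsari's refinement in \cite{Afsari2011} of the classical Karcher argument, and the flexibility in choosing $\mathcal{N}_\sigma$ to be either $\Mm{}$ or the smaller ball $\mathcal{B}_{2\sigma}(p)$ allows one to bypass any assumption of global completeness.
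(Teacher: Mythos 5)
The paper offers no proof of this statement; it is quoted directly from Afsari's theorem, so your sketch has to be measured against Afsari's actual argument, and there it has a genuine gap. The crux of your plan is step (ii): strict convexity of $t \mapsto \dist_{\Mm}(\gamma(t), y_i)^2$ via the Hessian comparison bound $h_K(r) = r\sqrt{K}\cot(r\sqrt{K})$. But $h_K(r) > 0$ only for $r < \pi/(2\sqrt{K})$, whereas under the stated hypothesis the distances from a point of $\mathcal{B}_\sigma(p)$ to the data can be as large as $2\sigma \le \pi/\sqrt{K}$ --- twice the positivity threshold, not below it. So your claim that the assumption ``is tailored exactly so that all distances \dots remain below the critical threshold'' is false, and the weighted cost need not be geodesically convex on $\mathcal{B}_\sigma(p)$ in this regime. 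A concrete instance: on the unit sphere ($K=1$, $\mathrm{inj}=\pi$) the theorem allows $\sigma = \pi/2$, i.e.\ data spread over an open hemisphere, where the squared distance to a fixed point fails to be convex beyond distance $\pi/2$; uniqueness of the Fr\'echet mean still holds there (this is Kendall's theorem), but it cannot be obtained by the plain convexity argument you describe. Your route proves only the classical Karcher-type statement with $\pi/(2\sqrt{\kappa_u^+})$ in place of $\pi/\sqrt{\kappa_u^+}$; closing the factor of two is precisely the content of Kendall's and Afsari's refinements and requires a genuinely different mechanism (Afsari's comparison lemmas and his separate argument locating the minimizer, not ``bookkeeping'' of the same Rauch estimate).

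A secondary, fixable issue: your confinement step is also too quick. The triangle inequality together with $F(p) < \sigma^2$ only rules out minimizers at distance at least $2\sigma$ from $p$; it says nothing about the annulus between $\sigma$ and $2\sigma$, so placing the global minimizer inside $\mathcal{B}_\sigma(p)$ (as the theorem asserts, and as the convexity argument needs) requires an additional argument, which in Afsari's paper is a separate part of Theorem 2.1 rather than a one-line estimate.
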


\section{Prior models for manifold-valued function approximation}\label{sec:3}
	Before introducing MTSM, we briefly review the two manifold-valued function approximation methods from the literature that our method will extend and unify.
	
	\subsection{STSM: Single tangent space model} \label{sec_1tsm}
    This approach from \cite{simon2024approx,zimmermann2021manifold} and \cite[Section 4]{Zimmermann2024multihermi} constructs an approximating function for \cref{eqn_f} from input-output samples as in \cref{eqn_samples} by seeking to solve
	\begin{equation} \label{eqn_stsm}
		\mathscr{S} f := \argmin_{\widehat{f} \colon \Omega \to \Mm{}} \; \sum_{i=1}^{N} \dist_\Mm{}(\widehat{f}(x_i),y_i )^2,
	\end{equation}
	where the approximating function $\widehat{f}$ is anchored at a single anchor point $p^* \in \Mm{}$ and has
	the form
	\begin{equation*}
		\widehat{f}(x):= \mathrm{Exp}_{p^*} \bigl( \widehat{g}(x) \bigr),
	\end{equation*}
	in which $\widehat{g}: \Rr^n \rightarrow T_{p^*}\Mm{}$ is a vector-valued function from a suitable function space.

	The anchor point $p^*$ is often chosen as a Fr\'echet mean, as in \cite{simon2024approx,Zimmermann2024multihermi}, or as a random output sample from $\{y_1,\dots,y_N\}$, as in \cite{Amsallem2011ipmor,zimmermann2021manifold}.

	To determine the vector-valued function $\widehat{g}$, the following auxiliary classic multivariate function approximation problem is solved:
	\begin{equation}\label{eq:TSM_p2}
		\min_{\widehat{g} \colon \Rr^n \to T_{p^*}\Mm{}}\; \sum_{i=1}^{N} \dist_{T_{p^*}\Mm{}}\bigl( \widehat{g}(x_i),\mathrm{Log}_{p^*}(y_i) \bigr)^2,
	\end{equation}
	where the minimization is over the function space determined by the chosen multivariate approximation method and $\dist_{T_{p^*}\Mm{}}$ is the straight-line distance induced by the inner product $\mathbf{g}_{p^*}$ defined by the Riemannian metric $\mathbf{g}$ at $p^*$.
	Any vector-valued function approximation method can be used to solve~\cref{eq:TSM_p2}, because the codomain of $\widehat{g}$ is a vector space of dimension equal to the dimension of $\Mm{}$.
	
	Optimization problem \cref{eq:TSM_p2} is in general not equivalent to \cref{eqn_stsm} because $\mathrm{Exp}_{p^*}$ is only a \emph{radial isometry} \cite{lee2018rie}, which preserves the distances between points $v_1$ and $v_2$ in $T_{p^*}\Mm{}$ only if the straight line passing through $v_1$ and $v_2$ passes through the origin. The distance between any other pair of points will be distorted by the curvature of the Riemannian manifold $\Mm{}$. For this reason, to measure the accuracy of STSM, an error bound of this model was proposed in \cite{simon2024approx}, which is recalled next.

	\begin{theorem}[Error bound of STSM {\cite[Theorem~3.1]{simon2024approx}}]\label{thm_error_TSM}
		Consider a Riemannian manifold $\Mm{}$ and a point $p^* \in \Mm{}$.
		Let $f: \Omega \rightarrow \Mm{}$, where $\Omega$ is a set whose image is contained in a geodesic ball $\mathcal{B}_\sigma(p^*) \subset \Mm{}$.
		Assume that $\widehat{f}=\mathrm{Exp}_{p^*}\circ \widehat{g},$ where $\widehat{g}: \Omega \rightarrow B_\sigma(p^*)$ is an approximation such that
		\[
		\bigl\|\mathrm{Log}_{p^*}\bigl( f(x) \bigr) -\widehat{g}(x) \bigr\|_{p^*} \leq \epsilon
		\]
		for all $x \in \Omega$. Let $L:=\kappa_\ell^-(\mathcal{B}_\sigma(p^*))$. Then, for all $x \in \Omega$, the distance between $f(x)$ and $\widehat{f}(x)$ on $\Mm{}$ obeys
		\[
		\dist_{\Mm{}}(f(x), \widehat{f}(x)) \leq
		\epsilon +
		\begin{cases}
			 0, & \text { if } L = 0, \\
			 \frac{2}{\sqrt{|L|}} \operatorname{arcsinh} \left(\frac{\epsilon \sinh (\sigma \sqrt{|L|})}{2 \sigma}\right), & \text { if } L<0.
		\end{cases}
		\]
	\end{theorem}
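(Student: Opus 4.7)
The plan is to reduce the distance bound to a triangle comparison in a constant-curvature model space via Toponogov's hinge theorem. First I would set $v_1 := \mathrm{Log}_{p^*}(f(x))$ and $v_2 := \widehat{g}(x)$, so that $\|v_1 - v_2\|_{p^*} \le \epsilon$ and $v_1, v_2 \in B_\sigma(p^*)$ hold by hypothesis. The points $p^*$, $f(x) = \mathrm{Exp}_{p^*}(v_1)$, and $\widehat{f}(x) = \mathrm{Exp}_{p^*}(v_2)$ then form a geodesic triangle in $\mathcal{B}_\sigma(p^*)$ with two radial edges from $p^*$ of lengths $a := \|v_1\|_{p^*}$ and $b := \|v_2\|_{p^*}$, including the angle $\theta$ between $v_1$ and $v_2$ in $\mathbf{g}_{p^*}$. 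Since $\mathrm{sec} \ge L$ on $\mathcal{B}_\sigma(p^*)$, Toponogov's hinge theorem bounds the third side on $\Mm{}$ by the length $d$ of the corresponding third side in the simply connected $2$-dimensional model space of constant curvature $L$.

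For $L = 0$ the model is Euclidean, so $d = \|v_1 - v_2\|_{p^*} \le \epsilon$, immediately matching the stated bound. For $L < 0$, set $\lambda := \sqrt{|L|}$ and apply the hyperbolic law of cosines, $\cosh(\lambda d) = \cosh(\lambda a)\cosh(\lambda b) - \sinh(\lambda a)\sinh(\lambda b)\cos\theta$. The key algebraic step I plan to carry out is to rewrite this, using $\cosh(\lambda d) - 1 = 2\sinh^2(\lambda d/2)$ together with $\cosh(\lambda(a-b)) = \cosh(\lambda a)\cosh(\lambda b) - \sinh(\lambda a)\sinh(\lambda b)$, in the half-angle form
\[
\sinh^2(\lambda d/2) = \sinh^2(\lambda(a-b)/2) + \sinh(\lambda a)\sinh(\lambda b)\sin^2(\theta/2).
\]
Next I would exploit the monotonicity of $x \mapsto \sinh(\lambda x)/x$ on $(0,\infty)$: since $a$, $b$, and $|a-b|/2$ are all at most $\sigma$, each term on the right is majorized by $(\sinh(\lambda\sigma)/(2\sigma))^2$ times the corresponding planar quantity. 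Combined with the Euclidean law of cosines $\|v_1 - v_2\|_{p^*}^2 = (a-b)^2 + 4ab\sin^2(\theta/2)$, this collapses the right-hand side to $\sinh^2(\lambda d/2) \le \epsilon^2 \sinh^2(\lambda\sigma)/(4\sigma^2)$. Inverting $\sinh$ produces $d \le \frac{2}{\lambda}\operatorname{arcsinh}(\epsilon\sinh(\lambda\sigma)/(2\sigma))$, and adding the harmless extra $\epsilon$ brings this into the precise form stated.

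The main obstacle I anticipate is technical rather than computational: the curvature bound $\mathrm{sec} \ge L$ is only assumed on $\mathcal{B}_\sigma(p^*)$, not globally, so one must justify a local version of Toponogov's theorem. The two radial edges automatically stay in $\mathcal{B}_\sigma(p^*)$, being images of line segments in $B_\sigma(p^*)$ under $\mathrm{Exp}_{p^*}$, but confirming that the minimizing geodesic between $f(x)$ and $\widehat{f}(x)$ also remains in the ball requires geodesic convexity of $\mathcal{B}_\sigma(p^*)$, which holds when $\sigma$ is at most the convexity radius at $p^*$. This can be bypassed by an equivalent argument based on Rauch's theorem for $d\mathrm{Exp}_{p^*}$: bounding $\|d\mathrm{Exp}_{p^*}|_v(w)\|_{\mathrm{Exp}_{p^*}(v)} \le (\sinh(\lambda\|v\|_{p^*})/(\lambda\|v\|_{p^*}))\|w\|_{p^*}$ and integrating along the tangent-space segment $(1-s)v_2 + sv_1 \in B_\sigma(p^*)$ only requires the curvature bound along radial geodesics from $p^*$, and in fact yields the slightly sharper estimate $\dist_{\Mm{}}(f(x),\widehat{f}(x)) \le \epsilon\sinh(\lambda\sigma)/(\lambda\sigma)$.
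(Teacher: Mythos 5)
You should first note that the paper itself gives no proof of \cref{thm_error_TSM}: it is imported verbatim from \cite[Theorem~3.1]{simon2024approx}, so your argument can only be judged on its own terms. On those terms, the computational core is correct and even slightly stronger than needed: with $\lambda:=\sqrt{|L|}$, the identity $\sinh^2(\lambda d/2)=\sinh^2(\lambda(a-b)/2)+\sinh(\lambda a)\sinh(\lambda b)\sin^2(\theta/2)$, the monotonicity of $x\mapsto\sinh(\lambda x)/x$, and the planar law of cosines do yield $\sinh(\lambda d/2)\le\epsilon\sinh(\lambda\sigma)/(2\sigma)$, i.e.\ the stated bound without the additive $\epsilon$ --- \emph{provided} the hinge comparison is available. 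That is precisely where the gap sits, and you flag it but do not close it under the stated hypotheses: Toponogov's hinge theorem is a global result (completeness plus a curvature bound on all of $\Mm$), whereas here $\mathrm{sec}\ge L$ is only assumed on $\mathcal{B}_\sigma(p^*)$, and the theorem assumes only $\sigma\le\operatorname{inj}(p^*)$, not that $\sigma$ is below the convexity radius; so the "local Toponogov" step needs a justification your write-up does not supply.

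More seriously, the proposed Rauch bypass does not repair this, because it proves a different inequality. The radial Jacobi-field estimate gives $\dist_{\Mm}(f(x),\widehat{f}(x))\le\epsilon\,\sinh(\lambda\sigma)/(\lambda\sigma)$, and your claim that this is "slightly sharper" than the stated bound is false outside the regime where $\epsilon\sinh(\lambda\sigma)/\sigma$ is small: take $\lambda=1$, $\sigma=5$, $\epsilon=1$ (admissible under the hypotheses); then your bound is $\sinh(5)/5\approx 14.8$ while the theorem asserts $1+2\operatorname{arcsinh}(\sinh(5)/10)\approx 6.4$. Since $\operatorname{arcsinh}$ grows only logarithmically, the Rauch estimate is exponentially weaker in $\lambda\sigma$ and cannot imply \cref{thm_error_TSM}. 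A workable patch that stays inside the ball (and explains the additive $\epsilon$ in the statement) is to split the move from $\widehat{f}(x)$ to $f(x)$ into a purely radial step, whose cost is exactly $\bigl|\,\|v_1\|_{p^*}-\|v_2\|_{p^*}\bigr|\le\epsilon$ by the radial isometry, followed by a comparison between two points at the common radius $\min\{\|v_1\|_{p^*},\|v_2\|_{p^*}\}$ whose tangent-space chord is again at most $\epsilon$; for that second step a hinge/Jacobi-field comparison (Berger's variant of Rauch) applied through the diffeomorphism $\operatorname{Exp}_{p^*}|_{B_\sigma(p^*)}$ needs the curvature bound only on $\mathcal{B}_\sigma(p^*)$ and produces exactly the term $\tfrac{2}{\lambda}\operatorname{arcsinh}\bigl(\tfrac{\epsilon\sinh(\lambda\sigma)}{2\sigma}\bigr)$.
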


	\Cref{thm_error_TSM} indicates that STSM can approximate a manifold-valued function with high accuracy if the image of $f$ is contained in a small geodesic ball in $\Mm{}$ and the vector-valued pullback function $\mathrm{Log}_{p^*} \circ f$ can be well approximated on $\Omega$. For a smaller negative sectional curvature, a smaller geodesic ball is required to maintain the same error bound.
	
	For the sake of simplicity, we state a more concise corollary of \cref{thm_error_TSM}.
	\begin{corollary}\label{col:error_TSM}
		Under the assumptions of \cref{thm_error_TSM}, if additionally $\sigma \sqrt{|L|} < \pi$, then
		\(
			\dist_{\Mm{}}(f(x), \widehat{f}(x)) \leq 5 \epsilon.
		\)
	\end{corollary}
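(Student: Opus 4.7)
The plan is to directly estimate the second summand in the error expression from \cref{thm_error_TSM}. The case $L=0$ is trivial, since then the bound reduces to $\dist_{\Mm{}}(f(x),\widehat{f}(x)) \leq \epsilon \leq 5\epsilon$. So the whole argument concerns the case $L<0$, and I would proceed in three short steps.

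First, I would invoke the elementary inequality $\operatorname{arcsinh}(t) \leq t$ valid for all $t \geq 0$, which follows from $\operatorname{arcsinh}(0)=0$ together with $\frac{\mathrm{d}}{\mathrm{d}t}\operatorname{arcsinh}(t) = (1+t^2)^{-1/2} \leq 1$. Writing $a := \sigma \sqrt{|L|}$ and applying this with $t = \epsilon \sinh(a)/(2\sigma)$ transforms the bound of \cref{thm_error_TSM} into
\[
 \dist_{\Mm{}}(f(x),\widehat{f}(x)) \;\leq\; \epsilon + \frac{2}{\sqrt{|L|}} \cdot \frac{\epsilon \sinh(a)}{2\sigma} \;=\; \epsilon + \epsilon \cdot \frac{\sinh(a)}{a}.
\]

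Second, I would use that $\sinh(x)/x = \sum_{k\geq 0} x^{2k}/(2k+1)!$ is strictly increasing on $(0,\infty)$, so the hypothesis $a < \pi$ gives $\sinh(a)/a < \sinh(\pi)/\pi$. A direct numerical evaluation yields $\sinh(\pi)/\pi \approx 3.676 < 4$, hence $\sinh(a)/a < 4$. Combining with the previous display gives the desired estimate $\dist_{\Mm{}}(f(x),\widehat{f}(x)) \leq \epsilon + 4\epsilon = 5\epsilon$.

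There is no real obstacle: the proof is two elementary analytic inequalities plus a numerical check of the constant $\sinh(\pi)/\pi < 4$. The only thing to be mildly careful about is choosing the constant $5$ cleanly; a slightly sharper corollary with constant $\epsilon + \sinh(\pi)/\pi \cdot \epsilon < 4.68\epsilon$ could be stated, but the rounded constant $5$ keeps the statement clean and is what the corollary claims.
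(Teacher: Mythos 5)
Your proof is correct and follows essentially the same route as the paper: bound $\operatorname{arcsinh}$ by its argument, then bound $\sinh(a)/a$ by $4$ using $a = \sigma\sqrt{|L|} < \pi$. The only difference is that you spell out the monotonicity of $\sinh(a)/a$ and the numerical check $\sinh(\pi)/\pi \approx 3.68 < 4$, which the paper asserts without detail.
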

	\begin{proof}
	If $L = 0$, the bound holds trivially by \cref{thm_error_TSM}. So consider the case $L < 0$.
	Exploiting that $\mathrm{arcsinh}(x) < x$ when $x>0$, we obtain that
		\begin{align*}
			\frac{2}{\sqrt{|L|}} \operatorname{arcsinh}\left(\frac{\epsilon}{2} \cdot \frac{\sinh (\sigma\sqrt{|L|})}{\sigma}\right)
			< \epsilon \cdot \frac{\sinh (\sigma \sqrt{|L|})}{\sigma \sqrt{|L|} }.
		\end{align*}
		If $\sigma \sqrt{|L|} < \pi$, we have $\frac{\sinh (\sigma \sqrt{|L|})}{\sigma \sqrt{|L|}}<4$, so that
		$\dist_{\Mm{}}(f(x),\widehat{f}(x))<5\epsilon$.
	\end{proof}

	\subsection{RMLS: Riemannian moving least squares} \label{sec_rmls}
	Aimed at solving a general manifold-valued function approximation problem, the Riemannian moving least squares method, which is based on the Fr\'echet mean and the moving least square method, was proposed by Grohs, Sprecher, and Yu \cite{Grohs17}.

	Recall from \cite{wendland04} that the Euclidean moving least squares method approximates a function $f : \Rr^n \to \Rr$ from input-output samples $\{ (x_i, y_i) \}_{i=1}^N \subset \Rr^n \times \Rr$ as
	\[
	(\mathscr{E} f)(x) :=
	\min_{p \in \pi_m\left(\Rr^n\right)} \;\sum_{i=1}^N \Phi\left(x, x_i\right)\cdot\left( p\left(x_i\right) - f\left(x_i\right)\right)^2,
	\]
	where $\pi_m (\Rr^n )$ denotes the space of all $n$-variate polynomials of total degree less than or equal to $m \in \mathbb{N}$, and the weight function $\Phi(x,x_i)$ calculates weights based on the distance between $x$ and the data points $x_i$. Usually, $\Phi$ is compactly supported so that the weight $\Phi(x,x_j) = 0$ if the distance between $x$ and $x_j$ exceeds a threshold.

	The foregoing scheme was extended to approximate manifold-valued functions as in \cref{eqn_f} from samples as in \cref{eqn_samples} by \cite{Grohs17}.
	If $\Phi : \Rr^n \to \Delta^{N-1}$ is a weight function, then the Riemannian moving least squares approximation is essentially a Fr\'echet mean (cf.~\cref{def_karcher_mean}) with adaptively determined weights:
	\begin{equation} \label{eqn_rmls}
		(\mathscr{R} f)(x):= \argmin_{p \in \Mm{}} \;\sum_{i=1}^{N} \varphi_i(x) \cdot \dist_{\Mm{}}(p,y_i)^2,
	\end{equation}
	where $\varphi_i: \Rr^n \to \Rr$ is the $i$th component of the weight function $\Phi$. The component weight functions can be chosen in many ways, such as Lagrangians as in \cite{sander2016geo} or compactly supported basis functions as in \cite{wendland04}.

	RMLS is well defined only on the domain where the Fr\'echet mean is unique. If the points are contained in a sufficiently small ball, this is guaranteed by \cref{thm:well_karcher}.

	\section{MTSM: Multiple tangent spaces model}\label{sec:R-TSM}
	This section introduces our approach to approximate manifold-valued functions via multiple tangent spaces.
	
		On the one hand, STSM requires that all data points lie in a single cluster on the manifold, enabling efficient function approximation from a local linearization. On the other hand, RMLS can handle data points spread across the manifold so long as the points with nonzero weights are all in a geodesic ball, but it is computationally expensive to evaluate as it involves a weighted Fr\'echet mean.
	
		The essence of the proposed MTSM is to plug in STSMs to replace dense clusters of points on the manifold to reduce the cost of computing a weighted Fr\'echet mean in RMLS. See \cref{fig:M1} for a visualization of an MTSM consisting of three STSMs.
		
		\begin{figure}\centering
			\includegraphics[width=0.8\textwidth]{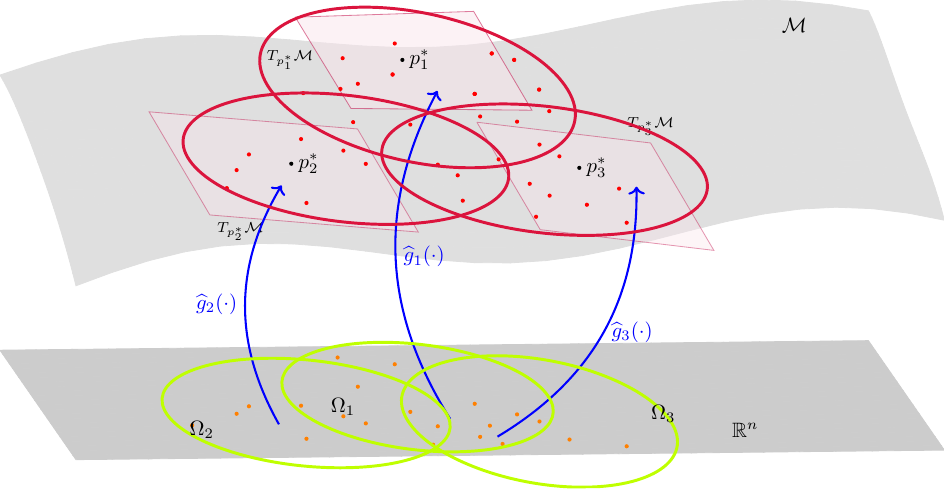}
				\caption{An illustration of a $3$-MTSM. It shows three tangent spaces at the anchor points $p^*_1,\ p^*_2,\ p^*_3$ and the relevant vector-valued functions $\widehat{g}_{j}: \Omega_j \rightarrow T_{p^*_j}\Mm{}$. The approximation consists of computing the weighted Fr\'echet mean of $\mathrm{Exp}_{p^*_j}\widehat{g}_{j}(x)$.}\label{fig:M1}
		\end{figure}
	
		\begin{definition}[Multiple tangent space model]\label{def_mtsm}
		The $R$-MTSM approximates an $f$ as in \cref{eqn_f} from the samples \cref{eqn_samples} by utilizing $R$ individual STSMs, as follows:
		\begin{equation}\label{eqn_mtsm}
			(\mathscr{M} f)(x)
			:= \argmin_{p \in \Mm{}}\sum_{j=1}^{R} \varphi_j\bigl( \widehat{f}_{j}(x) \bigr) \cdot \dist_\Mm{}\bigl( p, \widehat{f}_j(x) \bigr)^2,\quad
			\widehat{f}_j := \mathrm{Exp}_{p^*_j} \circ \widehat{g}_{j},
		\end{equation}
		where, for $j=1,\ldots,R$, we have that
		\begin{enumerate}
		\item $p^*_j \in \Mm{}$ is the anchor point of the $j$th STSM,
		\item $\widehat{g}_{j}: \Omega \supset \Omega_j \rightarrow T_{p^*_j}\Mm{}$ is the vector-valued function approximation of the $j$th STSM, and
		\item $\varphi_j: \Mm{} \rightarrow \Rr$ is the weight function that determines to what extent the $j$th STSM is active in the total approximation of $f$ at $x\in\Omega$.
		\end{enumerate}
		We use as convention that $\varphi_j( \widehat{f}_j(x) ) := 0$ if $x \not\in \Omega_j$.
		\end{definition}
	
	Note that $R$-MTSM can be viewed as a unification of STSM and RMLS. On the one hand, if $R=1$ and the single weight function $\varphi_1(x)$ is taken to be $1$, then this MTSM is an STSM. On the other hand, if $R = N$ and for $j = 1,\ldots,N$ we choose $\widehat{g}_j(x) = 0$ and $p_j^* = y_j$, then MTSM is an instance of RMLS in which all weights $\phi_j(0)$ are constants---naturally, this is not a recommended choice of weights.

	In the next subsections, we investigate some theoretical considerations about MTSM. In \cref{sec:alg}, we propose concrete algorithms to set up and evaluate MTSMs.

		\subsection{Well-posedness}
		How the three components in \cref{def_mtsm} should be chosen so that the minimization problem in the definition of MTSM is well posed is discussed in the next proposition. Practical choices are discussed in \cref{sec:alg}.
	
		\begin{proposition}[Well-posedness]\label{thm_rtsm_unique}
			Consider the map $f : \Omega \to \Mm{}$ from \cref{eqn_f} and the $R$-MTSM from \cref{def_mtsm}.
			Let $L := \kappa_\ell^-(\Mm) \le 0 \le \kappa_u^+(\Mm) =: K$ be the clipped curvature bounds.
			If, for all $j=1,\dots,R$, there exist $0 < \sigma_j \le \tau_j \le \mathrm{inj}(p_j^*)$ with $\tau_j \sqrt{|L|} \le \pi$ such that
			\begin{enumerate}
			\item[A1.] the $\mathcal{B}_{\sigma_j}(p_j^*)$'s cover $f(\Omega) \subset \Mm{}$;
			\item[A2.] the maps $\widehat{g}_j : \Omega_j \to \mathcal{B}_{\tau_j}(p_j^*)$, where $\Omega_j := f^{-1}( \mathcal{B}_{\sigma_j}(p_j^*) )$ is the preimage;
			\item[A3.] for all $x \in \Omega_j$ the error bound
			\[
			\| \widehat{g}_j(x) - (\mathrm{Log}_{p_j^*} \circ f)(x) \|_{p_j^*} \le \epsilon < \frac{1}{10} \min\left\{ \mathrm{inj}( \Mm{} ), \frac{\pi}{2\sqrt{K}} \right\}
			\]
			holds; and
			\item[A4.] the weights $\varphi_j$ form a partition of unity subordinate to the cover of $f(\Omega)$ by the larger geodesic balls $\mathcal{B}_{\tau_j}(p_j^*)$'s;
			\end{enumerate}
			then \cref{eqn_mtsm} is well posed, has a unique global minimizer for each $x \in \Omega$, and MTSM in \cref{def_mtsm} is well defined.
		\end{proposition}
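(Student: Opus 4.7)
The strategy is to reduce well-posedness to \cref{thm:well_karcher} by showing that, for each $x\in\Omega$, the points $\widehat f_j(x)$ carrying a nonzero weight all lie inside a geodesic ball whose radius is small enough for the uniqueness criterion of that theorem to apply with $\mathcal{N}_\sigma=\Mm{}$. Once this localization is in hand, the explicit bound on $\epsilon$ in A3 matches exactly the required bound on the radius.

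Fix $x\in\Omega$ and set $J(x):=\{j:\varphi_j(\widehat f_j(x))\ne 0\}$. The convention in \cref{def_mtsm} yields $J(x)\subset\{j:x\in\Omega_j\}$. For each $j\in J(x)$ I would apply \cref{col:error_TSM} to the $j$-th STSM with ambient radius $\sigma=\tau_j$: A2 gives $\widehat g_j(\Omega_j)\subset B_{\tau_j}(p_j^*)$; A1 yields $f(\Omega_j)\subset\mathcal B_{\sigma_j}(p_j^*)\subset\mathcal B_{\tau_j}(p_j^*)$; A3 supplies the pullback error bound $\epsilon$; and $\tau_j\sqrt{|L|}\le\pi$ is the curvature condition (the boundary case $\tau_j\sqrt{|L|}=\pi$ is harmless since $\sinh(\pi)/\pi<4$). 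The corollary then gives $\dist_{\Mm{}}(\widehat f_j(x),f(x))\le 5\epsilon$, so every active point lies in $\mathcal B_{5\epsilon}(f(x))$. The bound on $\epsilon$ in A3 rearranges to $5\epsilon<\tfrac12\min\{\mathrm{inj}(\Mm{}),\pi/\sqrt{K}\}$, which is precisely the hypothesis of \cref{thm:well_karcher} applied with center $p=f(x)$, radius $\sigma=5\epsilon$, and $\mathcal N_\sigma=\Mm{}$. Uniqueness of the minimizer in \cref{eqn_mtsm}, and therefore well-posedness of MTSM at $x$, follows.

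The main technicality is verifying that $J(x)\ne\emptyset$; otherwise the weighted objective is identically zero and no unique minimizer exists. By A1 there is some $j_0$ with $f(x)\in\mathcal B_{\sigma_{j_0}}(p^*_{j_0})$, hence $x\in\Omega_{j_0}$, and by the previous paragraph $\widehat f_{j_0}(x)$ lies within $5\epsilon$ of $f(x)$. A4 only states the partition-of-unity property on $f(\Omega)$, whereas in \cref{eqn_mtsm} the weights are evaluated at the displaced points $\widehat f_j(x)$; concluding $J(x)\ne\emptyset$ therefore requires reading A4 as supplying a partition of unity on a slight geodesic enlargement of $f(\Omega)$ contained in $\bigcup_j\mathcal B_{\tau_j}(p_j^*)$, which is precisely what the gap $\tau_j\ge\sigma_j$ together with the smallness $\epsilon<\tfrac1{10}\min\{\mathrm{inj}(\Mm{}),\pi/(2\sqrt K)\}$ are designed to accommodate. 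I expect this non-vanishing step to be the only delicate point; the rest of the argument is a direct pipeline from \cref{col:error_TSM} into \cref{thm:well_karcher}.
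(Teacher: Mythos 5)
Your argument is correct and is essentially the paper's own proof: both establish via \cref{col:error_TSM} (applied on the balls $\mathcal{B}_{\tau_j}(p_j^*)$ using A2--A3) that every prediction $\widehat f_j(x)$ carrying a nonzero weight lies within $5\epsilon$ of $f(x)$, and then invoke \cref{thm:well_karcher} with $p=f(x)$, $\mathcal{N}_\sigma=\Mm{}$, and radius $5\epsilon$, which A3 makes small enough. Concerning the one step you flag as delicate, the paper argues no more finely than you do: it reads A4 as asserting directly that the weights appearing in \cref{eqn_mtsm}, namely $\varphi_j(\widehat f_j(x))$ over the indices $j$ with $x\in\Omega_j$, are nonnegative and sum to one (as the concrete weights \cref{eqn_specific_weight_function} do by construction), so at least one is positive and no enlargement of $f(\Omega)$ is introduced.
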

		\begin{proof}If the weight $\varphi_i$ of a point $y_i$ is zero in \cref{def_karcher_mean}, then the Fr\'echet mean with or without the $i$th summand will be the same. Consequently, to determine if \cref{eqn_mtsm} is well posed, it suffices to show that
		\begin{enumerate}
		 \item[C1.] $\varphi_j(\widehat{f}_j(x)) > 0$ only if $x \in \Omega_j$ and $\widehat{g}_j(x)$ lies in the domain of $\mathrm{Exp}_{p_j^*}$;
		 \item[C2.] for all $x \in \Omega$, the Fr\'echet mean is taken over at least $1$ point; and
		 \item[C3.] the $\widehat{f}_j(x)$'s with $\varphi_j(\widehat{f}_j(x)) > 0$ satisfy the conditions of \cref{thm:well_karcher}.
		\end{enumerate}
		We do this in the next paragraphs.
	
		\textit{C1}: Because of the definition of a partition of unity in A4 (see \cite[p.~43]{lee2003smooth}), we observe that $\varphi_j(\widehat{f}_j(x)) > 0$ only if $x \in \Omega_j$ (by A1, A2, and the convention in the definition of $\varphi_j$) and $\widehat{f}_j(x) \in \mathcal{B}_{\tau_j}(p_j^*)$ (which holds by A2). As each $\mathcal{B}_{\tau_j}(p_j^*)$ is an open geodesic ball, $\widehat{g}_j(x)$ lies in the domain of $\mathrm{Exp}_{p_j^*}$.
	
		\textit{C2}: Let $x \in \Omega$ be arbitrary. Since the $\mathcal{B}_{\sigma_j}(p_j^*)$'s cover $f(\Omega)$ by A1, we can assume, after relabeling, that $x \in (\Omega_1 \cap \cdots \cap \Omega_\ell)$ for some $\ell \ge 1$. By A4, the $\varphi_j$'s form a partition of unity subordinate to the cover by the radius $\tau_j$ balls, so that $\varphi_j( \widehat{f}_j(x) ) \ge 0$ and $\sum_{j=1}^\ell \varphi_j( \widehat{f}_j(x) ) = 1$ \cite[p.~43]{lee2003smooth}. Hence, at least $1$ of these weights must be nonzero. Note that by C1 no other weights can be nonzero.
	
		\textit{C3}: Assume again that the nonzero weights are those corresponding to $j=1,\dots,\ell$. By C1, $\widehat{f}_j(x)$ is well defined. Then, exploiting the bound in A3, we conclude from \cref{col:error_TSM} that
		\(
		 \dist_\Mm{}( f(x), \widehat{f}_j(x) ) \le 5 \epsilon < \frac{1}{2} \min\{\mathrm{inj}(\Mm{}), \frac{\pi}{2} K^{-1/2} \}.
		\)
		This implies that the sufficient condition of \cref{thm:well_karcher} holds for $p=f(x)$, so the Fr\'echet mean is unique. This concludes the proof.
		\end{proof}
	
		\begin{remark}
		We can replace $\Mm{}$ by the covering of the larger geodesic balls $\mathcal{B}_{\tau_j}(p_j^*)$ in \cref{thm_rtsm_unique}. This can potentially improve the curvature bounds $L$ and $K$ as well as the injectivity radius.
		\end{remark}
	
		\Cref{thm_rtsm_unique} is not a vacuous statement. Assumptions A1 and A4 can always be satisfied with a finite $R$ for manifolds with a strictly positive injectivity radius. A simple choice of weight functions satisfying A4 is presented in \cref{eqn_specific_weight_function} in~\cref{sec:Numer_exp}. Assumptions A2 and A3 are satisfied for $\widehat{g}_j = (\LOG_{p_j^*} \circ f)|_{\Omega_j}$.
		
		\subsection{Smoothness}
		The key feature of MTSM is that it stitches together predictions from local STSMs through the Fr\'echet mean to obtain a potentially global approximation of a map into a manifold. Our main result is that the patches are stitched together smoothly if the vector-valued approximations are smooth.
	
		\begin{theorem}\label{thm_mtsm_smooth}
		 Under the assumptions of \cref{thm_rtsm_unique}, the MTSM approximation $(\mathscr{M} f) : \Omega \to \mathcal{M}$ defined in \cref{eqn_mtsm} is a smooth map if all $\widehat{g}_j : \Omega_j \to T_{p_j^*} \mathcal{M}$, $j=1,\dots,R$, are smooth maps.
		\end{theorem}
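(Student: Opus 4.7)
The approach is to realize $(\mathscr{M} f)(x)$ as the unique implicit solution of a smooth equation and then invoke the implicit function theorem on manifolds. Define the cost function $F : \Omega \times \Mm{} \to \Rr$ by
\[
F(x, p) := \sum_{j=1}^R \varphi_j\bigl(\widehat{f}_j(x)\bigr) \cdot \dist_\Mm{}\bigl(p, \widehat{f}_j(x)\bigr)^2,
\]
using the convention that summands whose weight vanishes contribute $0$ even when $\widehat{f}_j(x)$ is undefined. By construction, $(\mathscr{M} f)(x) = \argmin_{p \in \Mm{}} F(x, p)$, and \cref{thm_rtsm_unique} guarantees a unique minimizer for each $x \in \Omega$.

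First, I would verify that $F$ is jointly smooth in $(x, p)$ on an open neighborhood of the graph of $\mathscr{M} f$. Each $\widehat{f}_j = \mathrm{Exp}_{p_j^*} \circ \widehat{g}_j$ is smooth on $\Omega_j$ because $\widehat{g}_j$ is smooth by hypothesis and $\mathrm{Exp}_{p_j^*}$ is smooth on $B_{\tau_j}(p_j^*)$; smoothness of the partition of unity $\{\varphi_j\}$ is standard (compactly supported inside $\mathcal{B}_{\tau_j}(p_j^*)$), so each term extends smoothly by zero across $\partial\Omega_j$. The squared geodesic distance $\dist_\Mm{}^2$ is smooth on pairs of points contained in a common geodesic ball of radius bounded by $\tfrac{1}{2}\min\{\mathrm{inj}(\Mm{}), \pi/\sqrt{K}\}$; assumptions A1--A3 combined with \cref{col:error_TSM} place all active $\widehat{f}_j(x)$ and the minimizer $(\mathscr{M} f)(x)$ in such a ball, so this smoothness holds on the relevant set.

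Next, I would apply the implicit function theorem to the first-order optimality condition $\mathrm{grad}_p F(x, p) = 0$, which is satisfied at $p = (\mathscr{M} f)(x)$. The key step is showing that the Riemannian Hessian $\mathrm{Hess}_p F(x, (\mathscr{M} f)(x))$ is invertible. I would invoke Hessian comparison (Rauch's theorem or equivalent, as in \cite{Afsari2011}): on a geodesic ball of radius strictly less than $\tfrac{1}{2}\min\{\mathrm{inj}(\Mm{}), \pi/\sqrt{K}\}$, the Hessian of $p \mapsto \dist_\Mm{}(p, q)^2$ is uniformly positive definite. Since the weights $\varphi_j(\widehat{f}_j(x)) \ge 0$ sum to $1$, with at least one strictly positive by step C2 in the proof of \cref{thm_rtsm_unique}, the weighted sum inherits strict positive definiteness. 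The implicit function theorem then produces a local smooth solution of the optimality condition, which must coincide with $\mathscr{M} f$ by uniqueness of the Fr\'echet mean. Since $x\in\Omega$ was arbitrary, $\mathscr{M} f$ is smooth on all of $\Omega$.

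The main obstacle is the Hessian positivity step, which requires combining the curvature bound $K$, the radii $\tau_j$, and the injectivity radii via comparison theory; a supporting technicality is certifying smoothness of $F$ across the boundaries $\partial\Omega_j$, which is handled by the compact support property of the partition of unity since the weight factor switches off before the domain issue of $\widehat{g}_j$ can manifest.
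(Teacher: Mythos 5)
Your proposal follows essentially the same route as the paper's proof: realize $(\mathscr{M} f)(x)$ implicitly via the first-order condition of the weighted Fr\'echet functional, certify invertibility of the Riemannian Hessian through Karcher/Rauch comparison, and apply the implicit function theorem on manifolds, with uniqueness gluing the local solutions. One small correction: positive definiteness of the Hessian of $p \mapsto \tfrac{1}{2}\dist_{\Mm}(p,q)^2$ requires the distance $\dist_{\Mm}(p,q) < \pi/(2\sqrt{K})$ (the cotangent bound changes sign beyond that), not merely containment of $p$ and $q$ in a ball of radius below $\tfrac{1}{2}\min\{\mathrm{inj}(\Mm),\pi/\sqrt{K}\}$ as you state; this is still guaranteed in your setting because A3 bounds the pairwise distances between the minimizer and the active $\widehat{f}_j(x)$ by $10\epsilon < \min\{\mathrm{inj}(\Mm),\pi/(2\sqrt{K})\}$, which is exactly the estimate the paper uses.
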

	
		The idea of the proof is relatively straightforward: Show that \cref{eqn_mtsm} is the map that takes its input to the point on $\mathcal{M}$ where the gradient of the function inside of the minimization vanishes and then use the \emph{implicit function theorem} to show that such an implicitly defined function is smooth. Working out the technical details, however, requires more advanced machinery from Riemannian geometry. Therefore, the detailed proof can be found in \cref{proof}.
		
		\begin{remark}
		 \Cref{thm_mtsm_smooth} can be viewed as generalizing a result of Bergmann and Zimmermann \cite[Section 3.2]{Zimmermann2024multihermi} to the setting of \emph{quasi-interpolation} in the tangent space.
		 By allowing a slight error between $\widehat{f}_j$ and $f$, establishing that the Riemannian Hessians are invertible becomes significantly more complicated. Fortunately, Karcher's analysis of the local convexity of the distance function in \cite{karcher} applies.
		\end{remark}

		\subsection{Error bound}\label{sec_sub_error}
		An error bound of MTSM is obtained by combining \cref{thm_rtsm_unique} with \cref{thm_error_TSM,col:error_TSM} as follows.
		
		\begin{proposition}[Error bound]\label{prop:error_RTSM}
		Under the assumptions of \cref{thm_rtsm_unique},
		\[
		 \dist_{\Mm{}} \bigl( f(x), (\mathscr{M}f)(x) \bigr) \le 5 \epsilon.
		\]
		\end{proposition}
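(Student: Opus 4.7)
The plan is to chain the single-tangent-space error bound from \cref{col:error_TSM} with the standard fact that the weighted Fr\'echet mean of points sitting inside a sufficiently small geodesic ball remains inside that ball. Fix $x \in \Omega$ arbitrary. From the proof of \cref{thm_rtsm_unique} (step C2) we may assume, after relabeling, that the STSMs carrying nonzero weight at $x$ are indexed by $j = 1, \dots, \ell$ for some $\ell \ge 1$.

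For each such active $j$, the hypotheses of \cref{col:error_TSM} hold at the anchor $p_j^*$: by A2 the image of $\widehat{g}_j$ lies in the geodesic ball $B_{\tau_j}(p_j^*)$ with $\tau_j \sqrt{|L|} \le \pi$, and by A3 the tangent-space error is uniformly bounded by $\epsilon$. Invoking \cref{col:error_TSM} therefore yields
\[
 \dist_\Mm{}\bigl( f(x), \widehat{f}_j(x) \bigr) \le 5\epsilon
\]
for every active $j$, so all active predictions $\widehat{f}_j(x)$ lie in the closed geodesic ball $\overline{\mathcal{B}_{5\epsilon}(f(x))}$.

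It only remains to show that the Fr\'echet mean $(\mathscr{M} f)(x)$ is itself contained in $\overline{\mathcal{B}_{5\epsilon}(f(x))}$. By A3, the radius $5\epsilon$ satisfies $5\epsilon < \tfrac{1}{2}\min\{\mathrm{inj}(\Mm{}), \pi/(2\sqrt{K})\}$, so \cref{thm:well_karcher} applies with $p = f(x)$ and $\sigma = 5\epsilon$, producing a globally unique minimizer of the Fr\'echet objective in \cref{eqn_mtsm}. The step I expect to be the most delicate is that \cref{thm:well_karcher} as quoted only asserts uniqueness, not containment of the minimizer in the ball. This containment is however a standard byproduct of the Karcher/Afsari argument: on a ball of this size the squared distance to any of its points is strongly geodesically convex, so the objective in \cref{eqn_mtsm} is geodesically convex on $\overline{\mathcal{B}_{5\epsilon}(f(x))}$ and its Riemannian gradient points strictly inward on the boundary, forcing the unique minimum to lie inside. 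Either citing Afsari's fuller statement or inserting a short convexity lemma then closes the argument and gives $\dist_\Mm{}(f(x), (\mathscr{M} f)(x)) \le 5\epsilon$.
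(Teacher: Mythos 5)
Your proposal is correct and takes essentially the same route as the paper: bound $\dist_{\Mm{}}\bigl(f(x),\widehat{f}_j(x)\bigr)\le 5\epsilon$ for every active STSM via \cref{col:error_TSM} (as already established in the proof of \cref{thm_rtsm_unique}), then show the weighted Fr\'echet mean remains in the geodesic ball $\mathcal{B}_{5\epsilon}(f(x))$. The containment step you correctly flag as the delicate point is resolved in the paper exactly along the lines you sketch: the ball of radius $5\epsilon$ is convex (Petersen, Theorem 6.4.8) and Karcher's Theorem 1.2 guarantees that the minimizer of \cref{eqn_mtsm} is attained in its interior, which plays the role of the ``fuller statement'' you propose to cite.
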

		\begin{proof}
		Let $x\in\Omega$ be arbitrary. As in the proof of \cref{thm_rtsm_unique}, we can assume without loss of generality that $x \in \Omega_1 \cap\dots\cap \Omega_\ell$ for some $1 \le \ell \le R$.
		Then, the proof of \cref{thm_rtsm_unique} revealed that 
		\[      
		 \dist_{\mathcal{M}} \bigl(f(x), \widehat{f}_j(x) \bigr) \le 5\epsilon < \frac{1}{2} \min\left\{\mathrm{inj}(\mathcal{M}), \frac{\pi}{2\sqrt{K}} \right\}, \quad j=1,\dots,\ell.
		\]
		This implies that the geodesic ball $\mathcal{B}_{5\epsilon}(f(x))$ is \emph{convex} by \cite[Theorem 6.4.8]{Petersen}. The main result of Karcher \cite[Theorem 1.2]{karcher} implies that the minimum of \cref{eqn_mtsm} is attained in the interior of this convex ball.
		As $(\mathscr{M}f)(x)$ is the solution of \cref{eqn_mtsm} and the only nonzero $\varphi_j(\widehat{f}_j(x))$'s are those with $x \in \Omega_j$, this concludes the proof.
		\end{proof}
		
		\Cref{prop:error_RTSM} implies that the global approximation error of MTSM depends on the local approximation errors of the individual STSMs. This result is not surprising, as we are essentially employing a partition of unity in \cref{def_mtsm} to smoothly determine which individual STSMs are active while the Fr\'echet mean is used to combine their individual predictions.

		\subsection{Number of tangent spaces}
		The number of tangent spaces can affect the approximation accuracy, as we except that by using more tangent spaces we can reduce their radii $\sigma_j$ which may improve the local approximation error $\epsilon$ in \cref{prop:error_RTSM}.
		We first consider a theoretical upper bound on the number of anchor points so that their geodesic balls $\mathcal{B}_{\sigma_j}({p_j^*})$ can cover the image of $f$. This bound can be obtained from a consequence of the \emph{Bishop--Gromov's relative volume comparison theorem} \cite{richard1964bishop}, called \emph{Gromov's Packing Lemma}. The latter provides a bound on the number of small balls to cover a larger ball.
	
		\begin{lemma}[Gromov's Packing Lemma {\cite{gromov1981structures}}] \label{lem_packing}
			Let $\Mm{}$ be an $n$-dimensional Riemannian manifold whose sectional curvatures are lower bounded by $L \in \Rr$. Let $M_L$ be the complete $n$-dimensional simply connected space of constant sectional curvature $L$ and let $q\in M_L$ be any point.
			Given $\Sigma \ge \sigma > 0$, for all $p \in \Mm{}$, there exists a set $\{\mathcal{B}_{\sigma}(p_i) \mid i = 1, \dots, \ell \} \subset \Mm$ with $p_i \in \mathcal{B}_\Sigma(p)$ of at most
			\[
			\mathcal{V}_{\Mm}\left( 2\Sigma, \frac{1}{4}\sigma \right) := \frac{\mathrm{Vol}(\mathcal{B}_{2\Sigma}(q))}{\mathrm{Vol}(\mathcal{B}_{\sigma/4}(q))}
			\]
			balls covering $\mathcal{B}_{\Sigma}(p)$, where $\mathrm{Vol}$ denotes the volume of a geodesic ball on $M_L$.
		\end{lemma}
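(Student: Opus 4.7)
My plan is to build the cover greedily by selecting a maximally $\tfrac{\sigma}{2}$-separated set of points inside $\mathcal{B}_\Sigma(p)$ and then to bound the cardinality of that set through a volume comparison against the constant curvature model space $M_L$, as provided by the Bishop--Gromov theorem. The curvature hypothesis $\mathrm{sec}\ge L$ is used only in the final volume-comparison step; everything else is purely metric and reduces to a discrete packing argument.

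First I would apply Zorn's lemma (or, when $\mathcal{B}_\Sigma(p)$ is precompact, a direct finite-exhaustion argument) to choose a maximal set $\{p_1,\dots,p_\ell\}\subset\mathcal{B}_\Sigma(p)$ with pairwise distances at least $\sigma/2$. Maximality then immediately yields the covering property: any $x\in\mathcal{B}_\Sigma(p)$ must satisfy $\dist_\Mm(x,p_i)<\sigma/2$ for some $i$, because otherwise $\{p_1,\dots,p_\ell,x\}$ would still be $\sigma/2$-separated; hence the enlarged balls $\mathcal{B}_\sigma(p_i)$ cover $\mathcal{B}_\Sigma(p)$. In the opposite direction, the $\sigma/2$-separation combined with the triangle inequality forces the shrunk balls $\mathcal{B}_{\sigma/4}(p_i)$ to be pairwise disjoint, and since $\sigma/4\le\Sigma$ each of them lies inside $\mathcal{B}_{\Sigma+\sigma/4}(p)\subseteq\mathcal{B}_{2\Sigma}(p)$.

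The counting bound then comes from a volume argument. Under $\mathrm{sec}\ge L$, and hence $\mathrm{Ric}\ge(n-1)L$, the Bishop--Gromov theorem asserts that for every $x\in\Mm$ the ratio $\mathrm{Vol}(\mathcal{B}_r(x))/V_L(r)$ is non-increasing in $r$, where $V_L(r):=\mathrm{Vol}(\mathcal{B}_r(q))$ is the volume of a ball of radius $r$ in $M_L$. Applying this at a $p_i$ realising the smallest shrunk-ball volume, together with the inclusion $\bigsqcup_{j}\mathcal{B}_{\sigma/4}(p_j)\subseteq\mathcal{B}_{R}(p_i)$ for some $R$ not exceeding $2\Sigma+\sigma/4$, allows the Riemannian volume $\mathrm{Vol}(\mathcal{B}_{\sigma/4}(p_i))$ to be cancelled on both sides of the resulting inequality, leaving the purely model-space statement $\ell\cdot V_L(\sigma/4)\le V_L(R)$. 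Rearranging yields the claimed bound $\ell\le\mathcal{V}_\Mm(2\Sigma,\sigma/4)$.

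I expect the main technical obstacle to be this last step: setting up the Bishop--Gromov inequalities so that the intrinsic volumes in $\Mm$ drop out and the bound ends up expressed entirely in terms of the model-space volumes $V_L(2\Sigma)$ and $V_L(\sigma/4)$, rather than in the slightly looser form $V_L(2\Sigma+\sigma/4)/V_L(\sigma/4)$ that appears if one is not careful about which point and which containment the comparison is applied to. Once this bookkeeping is dealt with, the remainder of the argument is just maximality of the net and the triangle inequality, exactly as in the Euclidean packing proof.
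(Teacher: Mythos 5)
The paper itself gives no proof of this lemma: it is imported verbatim from Gromov \cite{gromov1981structures}, so there is no internal argument to compare against, and your proposal has to stand on its own. Its skeleton is the standard one and is sound: a maximal $\tfrac{\sigma}{2}$-separated set gives covering by maximality, the $\tfrac{\sigma}{4}$-balls are pairwise disjoint by the triangle inequality, and the count follows from Bishop--Gromov relative volume comparison (valid here since $\mathrm{sec}\ge L$ implies $\mathrm{Ric}\ge (n-1)L$), applied at the center whose small ball has least volume so that the intrinsic volumes cancel.

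The genuine gap is exactly the step you flag and then leave unresolved. With the net chosen anywhere in $\mathcal{B}_\Sigma(p)$ you can only guarantee $\dist_\Mm(p_{i_0},p_j)<2\Sigma$, so the disjoint balls sit inside $\mathcal{B}_{2\Sigma+\sigma/4}(p_{i_0})$ and your comparison delivers $\ell \le \mathrm{Vol}(\mathcal{B}_{2\Sigma+\sigma/4}(q))/\mathrm{Vol}(\mathcal{B}_{\sigma/4}(q))$, strictly weaker than the stated $\mathcal{V}_{\Mm}\left(2\Sigma,\tfrac{1}{4}\sigma\right)$; hoping the bookkeeping works out does not close this. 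It can be closed by a small modification of the same argument: take the maximal $\tfrac{\sigma}{2}$-separated set inside the shrunken ball $\overline{\mathcal{B}}_{\Sigma-\sigma/2}(p)$ (note $\Sigma\ge\sigma$ keeps this nonempty, and its points still lie in $\mathcal{B}_\Sigma(p)$). Coverage of $\mathcal{B}_\Sigma(p)$ by the radius-$\sigma$ balls survives: for $x$ with $\Sigma-\sigma/2\le\dist_\Mm(p,x)<\Sigma$, choose $y$ on a minimizing geodesic from $p$ to $x$ with $\dist_\Mm(p,y)=\Sigma-\sigma/2$ (assuming, as usual for this lemma, that such geodesics exist, e.g.\ $\Mm$ complete); then $\dist_\Mm(x,y)<\sigma/2$ and $y$ is within $\sigma/2$ of a net point, so $\dist_\Mm(x,p_i)<\sigma$. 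Now any two centers satisfy $\dist_\Mm(p_i,p_j)\le 2\Sigma-\sigma$, hence every $\mathcal{B}_{\sigma/4}(p_j)$ lies in $\mathcal{B}_{2\Sigma}(p_{i_0})$, and applying Bishop--Gromov at the minimizing $p_{i_0}$ gives $\ell\le \mathrm{Vol}(\mathcal{B}_{2\Sigma}(q))/\mathrm{Vol}(\mathcal{B}_{\sigma/4}(q))$ exactly as claimed. (For the way the lemma is used in \cref{up_numberT} the looser constant would have been harmless, but the statement as written requires this extra care.)
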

	
		Incorporating Gromov's Packing Lemma and the error bound of MTSM, the next relationship between the upper bound on the number of tangent spaces and the error bound in MTSM can be observed.
		
		\begin{corollary}\label{up_numberT}
		If we restrict the radii of the smaller geodesic balls to $\sigma_1=\cdots=\sigma_R = \min\{\mathrm{inj}(\Mm), \pi / \sqrt{|L|} \}$ in \cref{thm_rtsm_unique}, then a smooth map $f$ can be approximated with $R$-MTSM under the assumptions of \cref{thm_rtsm_unique} for some value of $R$ satisfying
		\[
		\frac{\mathrm{diam}(f(\Omega))}{2 \sigma_1} \le R \le \mathcal{V}_{\Mm}\left( \mathrm{diam}(f(\Omega)), \frac{1}{4} \sigma_1 \right),
		\]
		where $\mathrm{diam}(S) := \sup_{p,q \in S} \operatorname{dist}_\Mm(p,q)$ for any subset $S \subset \Mm$.
		\end{corollary}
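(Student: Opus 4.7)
The plan is to prove the two inequalities separately: the \emph{upper bound} constructively via Gromov's Packing Lemma (\cref{lem_packing}), and the \emph{lower bound} via a connectedness-plus-ball-hopping argument using condition A1 of \cref{thm_rtsm_unique}.

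For the upper bound on the minimum required $R$, I would pick any $p \in f(\Omega)$ and observe that $f(\Omega) \subset \mathcal{B}_{\Sigma}(p)$ for $\Sigma = \mathrm{diam}(f(\Omega))/2$, provided $f(\Omega)$ admits a Chebyshev center (e.g., by appealing to strong convexity results on Hadamard or simply-connected manifolds of bounded curvature; otherwise one takes $\Sigma=\mathrm{diam}(f(\Omega))$ and absorbs the factor). Invoking \cref{lem_packing} with this $\Sigma$ and $\sigma=\sigma_1$ produces at most $\mathcal{V}_{\Mm}(2\Sigma,\sigma_1/4)=\mathcal{V}_{\Mm}(\mathrm{diam}(f(\Omega)),\sigma_1/4)$ geodesic balls $\mathcal{B}_{\sigma_1}(p_j^*)$ covering $f(\Omega)$. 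I then designate these centers as anchor points, set $\tau_j=\sigma_1$, take $\widehat{g}_j=\mathrm{Log}_{p_j^*}\circ f|_{\Omega_j}$ so that $\epsilon=0$ in A3, and construct a smooth partition of unity subordinate to the cover (as in \cref{eqn_specific_weight_function}) to verify A4. The resulting $R$-MTSM satisfies all hypotheses of \cref{thm_rtsm_unique} with $R$ bounded above as claimed.

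For the lower bound, suppose any valid $R$-MTSM has been built under the assumptions of \cref{thm_rtsm_unique}. Assuming $\Omega$ is connected (so $f(\Omega)$ is connected), condition A1 forces $f(\Omega)$ to lie in a single connected component of $\bigcup_{j=1}^{R}\mathcal{B}_{\sigma_1}(p_j^*)$. Pick $p,q\in f(\Omega)$ with $\operatorname{dist}_{\Mm}(p,q)$ arbitrarily close to $\mathrm{diam}(f(\Omega))$, and use the graph structure induced by overlapping balls to build a piecewise-geodesic \emph{ball-hopping} path from $p$ to $q$ traversing at most $R$ balls. Each segment lies in a single ball of radius $\sigma_1$ and so has length at most $2\sigma_1$, giving total length at most $2R\sigma_1$. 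Since a path length is at least the geodesic distance, $2R\sigma_1 \ge \operatorname{dist}_{\Mm}(p,q)$, and passing to the supremum yields $R \ge \mathrm{diam}(f(\Omega))/(2\sigma_1)$.

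The hard part is the lower bound: the covering $\bigcup_j \mathcal{B}_{\sigma_1}(p_j^*)$ has arbitrary combinatorial structure, so the ball-hopping path must be built so that consecutive segments share an endpoint in the overlap of two consecutive balls, which requires an inductive construction walking along a path in the nerve graph of the cover. A minor subtlety in the upper bound is reconciling whether the first argument of $\mathcal{V}_{\Mm}$ is $\mathrm{diam}(f(\Omega))$ or $2\,\mathrm{diam}(f(\Omega))$, depending on whether one uses a Chebyshev center or an arbitrary base point; either way, the conclusion holds up to a constant factor in $\Sigma$.
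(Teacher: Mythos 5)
Your proposal is correct and follows essentially the same route as the paper: the upper bound is read off from Gromov's Packing Lemma (\cref{lem_packing}), and the lower bound comes from connectedness of $f(\Omega)$ forcing any cover by geodesic balls of radius $\sigma_1$ satisfying A1 to use at least $\mathrm{diam}(f(\Omega))/(2\sigma_1)$ balls---your chain/ball-hopping argument is a careful rendering of the paper's path-covering argument. The $\mathrm{diam}(f(\Omega))$ versus $2\,\mathrm{diam}(f(\Omega))$ subtlety you flag in the first argument of $\mathcal{V}_{\Mm}$ is glossed over by the paper as well, so your treatment is, if anything, slightly more explicit on that point.
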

		\begin{proof}   
		The upper bound is immediate from \cref{lem_packing}.
		
		As $f$ is smooth and $\Omega \subset \Rr^n$ is open and connected, the image of a piecewise smooth path in $\Omega$ will be a piecewise smooth path in $f(\Omega) \subset \Mm$. By requirement A1 of \cref{thm_rtsm_unique}, we necessarily need to cover all paths in $f(\Omega)$ using the geodesic balls $\mathcal{B}_{\sigma_j}(p_j^*)$. Given balls of diameter $2 \sigma_1$, this can be accomplished by using a number of balls that is at least the length of the path in $f(\Omega)$ divided by $2\sigma_1$. The maximum distance between points in $f(\Omega) \subset \Mm$ is $\mathrm{diam}(f(\Omega))$ which lower bounds the length of any piecewise smooth path \cite{lee2018rie}. This concludes the proof.
		\end{proof}

		\section{Algorithms}\label{sec:alg}
		In this section, we introduce algorithms to set up and evaluate MTSM.
		The algorithms for setting up MTSM comprise the \emph{offline stage}, which is executed only once for a given approximation problem. The evaluation of MTSM comprises the \emph{online stage}, which is assumed to be executed a large number of times.
		These two stages are detailed respectively in \cref{subsec:offline} and~\cref{subsec:online}.
			
			\subsection{Offline stage}\label{subsec:offline} 
			The offline stage consists of two main steps: (i) compute suitable anchor points $p^*_j$, and (ii) compute the vector-valued functions $\widehat{g}_j$.
			
			\subsubsection*{Step 1}	
			For well-posedness, \cref{thm_rtsm_unique} prescribes that the $R$ anchor points $p_j^* \in \Mm$ satisfy assumption A1: the geodesic balls $\mathcal{B}_{\sigma_j}(p_j^*)$ with radii
			\begin{equation}\label{eqn_radii_limit}
			\sigma_j < \min\{ \mathrm{inj}(p_j^*), \pi / \sqrt{|L|} \}
			\end{equation}
			should cover the image of $f$. In our setting, $f$ is given through samples $S$ as in \cref{eqn_samples}. Verifying whether the balls $\mathcal{B}_{\sigma_j}(p_j^*)$ cover $f(\Omega)$ will thus not be possible. At best we can cover the set $\{ y_1, \dots, y_N \}$. Hence, it seems natural to choose the anchor points that minimize the maximum distance of the $y_i$'s to the nearest anchor point $p_j^*$; i.e.,
			\begin{equation} \label{eqn_max_cluster_dist}
				d^*_R = \min_{p_1^*,\dots,p_R^* \in \Mm} \, \max_{i=1,\dots,N} \, \min_{j=1,\dots,R} \|\mathrm{Log}_{p^*_j} y_i\|_{p^*_j},
			\end{equation}
			where $\|\mathrm{Log}_{p^*_j} y_i\|_{p^*_j} := \infty$ if $y_i$ does not lie in the injectivity radius of $p_j^*$.
			This is a standard clustering problem on a Riemannian manifold. 
			Hence, we propose to choose the anchor points as the cluster centers of the given points $y_i$'s that (approximately) solve \cref{eqn_max_cluster_dist}. For this we can use any existing clustering method, which assigns each point of $Y$ to one (i.e., partitioning) or more (i.e., soft clustering) clusters $C(p_i^*)$.
			
			The foregoing discussion shows that after fixing the number of anchor points $R$, it is sensible to choose them as cluster centers. But how do we determine $R$? We say that a clustering is \emph{covering} if $d_R^* < \infty$ is a finite value in \cref{eqn_max_cluster_dist}, which entails that each data point $y_i$ lies in the injectivity radius of at least one anchor point. To satisfy assumption A1 of \cref{thm_rtsm_unique}, $R$ should be such that the clustering is covering. Moreover, $R$ should also be sufficiently large so that \cref{eqn_radii_limit} holds; that is, $R$ should be any value such that $d_R^* < \pi / \sqrt{|L|}$. If all the other assumptions of \cref{thm_rtsm_unique} also hold, then \cref{prop:error_RTSM} guarantees that the error is bounded by the same quantity $5\epsilon$ regardless of $R$. Thus, to attain any fixed error bound, we propose to choose the smallest $R$ so that $d_R^* < \pi / \sqrt{|L|}$. \Cref{up_numberT} then gives some guidance on the range in which such an $R$ would lie under simplifying assumptions.
		
			\begin{algorithm}[tb]
				\caption{Adaptive anchor points selection}\label{alg:aco2}
				\begin{algorithmic}[1]
					\STATE{\textbf{Input}: Data points $Y = \{y_1,\ldots y_N\} \subset \Mm{}$, the maximal number of anchor points $R_{\max} < N$, an estimate of the injectivity radius $0 \le M \approx \mathrm{inj}(\Mm)$, a negative lower bound $L \le 0$ on the sectional curvatures of $\Mm$.}
					\STATE{\textbf{Output}: The clusters $C(p_i^*) \subset Y$ centered at the anchor points $p^*_i \in \Mm{}$.}
					\STATE{$R_{\min} \leftarrow \left\lfloor \frac{1}{2} \mathrm{ddiam}(Y) \max\{ M^{-1}, \sqrt{|L|}/\pi \} \right\rfloor$}
					\IF{$R_{\min} > R_{\max}$}
					\STATE $R_{\min} \leftarrow 1$
					\ENDIF 
					\FOR{$R = R_{\min},\ R_{\min}+1, \ldots, R_{\max}$}
					\STATE{Compute $R$ clusters $C(p_1^*) \cup \dots \cup C(p_R^*) = Y$ using any (soft or hard) clustering method.}
					\IF{$d_R^* < \pi / \sqrt{|L|}$}\label{alg:crite_anchor}
					\RETURN {$(C(p_1^*), \dots, C(p_R^*)$}
					\ENDIF
					\ENDFOR
					\RETURN {failure}
				\end{algorithmic}  
			\end{algorithm} 
			
			The above observations are summarized as \cref{alg:aco2}. To determine a reasonable initial number of clusters, we approximate the lower bound in \cref{up_numberT}. Neither under- or overshooting the left-hand side of \cref{up_numberT} impacts the correctness; if we undershoot it a few extra cluster sizes will be tested, while if we overshoot it a smaller number of anchor points might have sufficed.
			We use the quantity 
			\begin{equation*}
			\mathrm{ddiam}(Y) := \max_{\substack{x,y \in Y,\\ 0<\dist_\Mm(x,y)<\infty} } \dist_\Mm(x, y)
			\end{equation*}
			as a discrete approximation of the diameter of $f(\Omega)$, and a supplied $M$ as an approximation of the injectivity radius of $\Mm$. If $M = 0$, then $R_{\min}=1$ will be the initial number of clusters.	
			
			\subsubsection*{Step 2}
			Based on the first stage, we approximate the vector-valued function $g_j(x)$ in \cref{alg:aco3}. To compute the radius of the ball $B_{\sigma_j}(p^*_j)$, we suggest to estimate it as in line \ref{alg:line_radius} of \cref{alg:aco3}. 
			
			\begin{algorithm}[tb]
				\caption{Compute vector-valued function}\label{alg:aco3}  
				\begin{algorithmic}[1]
					\STATE \textbf{Input}: Anchor points $p^*_j \in \Mm{}$ and their clusters $C(p_j^*)$ for $j =1,\ldots,R$; and a sample set $S=\{ (x_i, y_i)\in\Omega\times\Mm \mid i=1,\dots,N \}.$
					\STATE{\textbf{Output}: Vector-valued functions $\widehat{g}_j : \Omega_j \to T_{p_j^*}\Mm,$ for $j=1,\dots,R$.}
					\FOR{$j = 1:R$}
					\STATE {Set the radius $\sigma_j := \max_{y \in C(p_j^*)} \|\mathrm{Log}_{p^*_j} y\|_{p^*_j}$} \label{alg:line_radius}
					\STATE{Compute $\widehat{g}_{j}$ from $\bigl\{(x_i,\mathrm{Log}_{p^*_j}y_i)\bigr\}_{i=1}^N$ using vector-valued function approximation methods}
					\ENDFOR
					\RETURN $(\widehat{g}_1, \dots, \widehat{g}_R)$
				\end{algorithmic}  
			\end{algorithm} 
			
			\subsection{Online stage}\label{subsec:online}
			
			The online stage consists of evaluating \cref{eqn_mtsm} for a give $x \in \Omega$. This is given by~\cref{alg:aco4}. The core step of the online stage is computing the weighted Fr\'echet mean, which can be computed, e.g., via Riemannian gradient descent \cite{mnoptbo} or approximated using successive geodesic interpolation \cite{chakraborty2019statistics, Chakraborty15Recursive}. In \cref{sec:Numer_exp}, we use a Riemannian gradient descent method.
			
			\begin{algorithm}[tb]
				\caption{Computing Riemannian multiple centroids (Online)}\label{alg:aco4}  
				\begin{algorithmic}[1]
					\STATE{\textbf{Input}: Anchor points $p^*_j \in \Mm{}$ for $j =1,\ldots,R$; the vector-valued functions $\widehat{g}_{j}$}; and the point $x \in \Omega$.
					\STATE{\textbf{Output}: The approximate function value $(\mathscr{M} f)(x)$.}
					\STATE {Compute weight $\varphi_j(x)$ as \eqref{eqn_specific_weight_function}.}
					\STATE  Define the active site $\mathrm{I}(x) = \{ j \mid \varphi_{j}(x) >0\}$.
					\IF{$\mathrm{I}(x) = \emptyset$}
					    \RETURN failure
					\ENDIF
		            \STATE {Compute $q_j = \mathrm{Exp}_{p_j^*}\widehat{g}_{j}(x)$ for all $j \in \mathrm{I}(x)$ }
					\IF{$\mathrm{I}(x) = \{j\}$}
						\RETURN $q_{j}$
					\ELSE
						\RETURN $ q^* =  \argmin_{p \in \Mm{}} \sum_{j \in \mathrm{I}} \varphi_j(x) \dist_{\Mm{}}(q_j,p)^2$
					\ENDIF
				\end{algorithmic}  
			\end{algorithm} 	
	
	\section{Numerical experiments}\label{sec:Numer_exp}
	In this section, we present numerical experiments to illustrate the performance of the proposed MTSM from \cref{sec:R-TSM} and the algorithms from \cref{sec:alg} as compared to other methods from the literature. We compare MTSM with three manifold-valued approximation schemes: STSM (see \cref{sec_1tsm}), RMLS (see \cref{sec_rmls}), and MRMLS from \cite[Section 5.4]{sharon2023mrmls}. We implemented RMLS based on the description in \cite[Section 4.3]{Grohs17} and MRMLS as described in \cite[Algorithm 5.1]{sharon2023mrmls}. STSM uses the same code as MTSM.
 	
	The general experimental setup is as follows. First, we construct a \emph{training set} $S=\{ (x_i, f(x_i)) \in \Omega\times\Mm \}^N_{i=1}$ for approximating a given $f : \Omega \to \Mm$ as in \cref{eqn_f}. This set is used by STSM and MTSM to build their model offline, while RMLS and MRMLS use it as the set of points whose weighted Fr\'echet mean is taken. Second, an independent \emph{test set} is generated to assess the performance of the constructed models online. In each experiment, all algorithms use the same training and test sets.

	\subsection{Implementation details}
	All algorithms were implemented as Matlab code. Our implementation including the numerical experiments is available at \url{https://gitlab.kuleuven.be/numa/public/mtsm}.
	All the experiments were performed on an HP EliteBook x360 830 G7 Notebook with 32 GiB memory and an Intel Core i7-10610U CPU using Matlab 2022a on Ubuntu 22.04.3 LTS. Matlab was allowed to use all four physical CPU cores.

	The setup of MTSM (and STSM as a special case) used throughout the experiments is as follows. We perform the offline stage of MTSM with \cref{alg:aco3} using a straightforward \emph{Riemannian $k$-means clustering algorithm} as hard clustering method; we transformed the implementation from Geomstat \cite{miolane2020Geomstats} into Matlab code. The online stage of MTSM is applied to the test set with \cref{alg:aco4}.
	Let $d_j^2(x) = \dist_\Mm{}(p^*_j, \widehat{f}_{j}(x))^2$ be the squared distance to the $j$th anchor point. This is a smooth function from $\Omega \to \Rr$ if $\widehat{f}_j : \Omega \to \Mm$ is smooth. Then, the smooth weight function we choose for MTSM is
	\begin{equation}\label{eqn_specific_weight_function}
	\varphi_j(x)  = \frac{h_j(d_j^2(x))}{\sum_{j=1}^{R}h_j(d_j^2(x))},\quad\text{where}\quad
	h_j(d) = \frac{e^{-(\sigma_j^2-d)_+^{-1}}}{e^{-(\sigma_j^2-d)_+^{-1}} + e^{-(d-c\sigma_j^2)_+^{-1}}}
	\end{equation}
	is a smooth cutoff function \cite[Chapter 2]{lee2003smooth}, $\sigma_j < \mathrm{inj}(p_j^*)$, $(x)_+ := \max\{0, x\}$ for $x\in\Rr$, $0 < c < 1$, and $e^{-1/0} = 0$. A cutoff function $h_j$ is visualized in \cref{fig_cutoff}. The value $c = \tfrac{1}{4}$ is used in the experiments.

	\begin{figure}[tb]
	\includegraphics{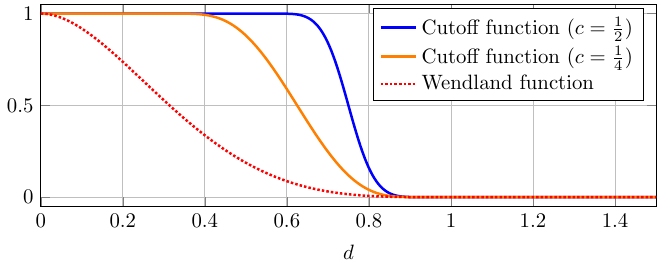}
	\caption{The Wendland function $\varphi(d)$ and the smooth cutoff function $\varphi_j(d)$ with $\sigma_j^2=1$ and $c=\frac{1}{2}$.}
	\label{fig_cutoff}
	\end{figure}

	For MRMLS and RMLS, the Wendland function \cite{PLI95} $\varphi(d):=(1-d)_{+}^4(4d+1)$ is used as weight function. It is visualized in \cref{fig_cutoff}. 

	The weighted Fr\'echet mean in RMLS, MRMLS and MTSM is computed by a trust region based optimization method \cite{mnoptbo} with the \texttt{centroid} function from the Manopt toolbox \cite{manopt} in Matlab.

	\subsection{Basic examples}\label{subsec:toy_example}

	We first compare MTSM to several manifold-valued function approximation methods from the literature on two simple benchmark problems involving matrix manifolds $\Mm$. In both cases, we measure the accuracy of the approximant $\widehat{f}:\Omega\to\Mm$ to $f:\Omega\to\Mm$ via the maximum relative error%
	\begin{equation}\label{eqn_relerr_matrix}
	 \mathrm{relErr}(f, \widehat{f}) := \max_{(\mathbf{x}, f(\mathbf{x})) \in T} \frac{\dist_{\Mm}(f(\mathbf{x}), \widehat{f}(\mathbf{x}))}{\|f(\mathbf{x})\|_F}
	\end{equation}
	on the test set $T \subset \Omega\times\Mm$, where $\|\cdot\|_F$ denotes the Frobenius norm of a matrix.

	\subsubsection{Symmetric positive-definite matrices}\label{subsec:spdfun}
	We consider the function that maps $[-1, 1]^2$ into the manifold of $3\times3$ symmetric positive-definite matrices $\mathcal{P}_3 \subset \Rr^{3\times 3}$ from \cite[Section 6.5.2]{sharon2023mrmls}:
	\begin{align}
		\nonumber f : [-1,1]^2 &\longrightarrow \mathcal{P}_3,\\
		\label{spd_fun} \mathbf{x} &\longmapsto 
		2 I  + (|\cos(2 x_2)+0.6|) e^{- x_1^2 - x_2^2}
		\begin{pmatrix}
			10+2\sin (5 x_2 ) & x_2 	& x_1 x_2 \\
			x_2 			  & 10 		& x_2^2 \\
			x_1 x_2			  & x_2^2 	& 10
		\end{pmatrix}.
	\end{align}
	We equip $\mathcal{P}_3$ with the affine invariant metric that is discussed in \cite[Chapter 11, section 7, equation (11.35)]{mnoptbo}. Then, the exponential and logarithmic maps are
	\begin{equation*}
	\begin{aligned}
		\operatorname{Exp}_P(\dot{P}) &= P^{1 / 2} \exp_m \left(P^{-1 / 2} \dot{P} P^{-1 / 2}\right) P^{1 / 2},\\
		\operatorname{Log}_P(Q) &= P^{1 / 2} \log_m \left(P^{-1 / 2} Q P^{-1 / 2}\right) P^{1 / 2},
	\end{aligned}
	\end{equation*}
	where $P, \ Q \in \mathcal{P}_3$; $\dot{P} \in T_P{\mathcal{P}_3}$; $P^{1/2}$ is the Cholesky factor of $P$; $\mathrm{exp}_m(\cdot)$ is the classic matrix exponential; and $\mathrm{log}_m(\cdot)$ is the classic matrix logarithm; see \cite[equations (11.37) and (11.38)]{mnoptbo}.

	We investigate the impact on the accuracy of STSM, MTSM, RMLS, and MRMLS with a varying number of training samples. For this, we generate training sets
	\[
		S_k = \{ (\mathbf{x}_i, f(\mathbf{x}_i)) \in [-1,1]^2 \times \mathcal{P}_3 \mid i=1,2,\dots, \lfloor 50 \cdot 1.5^k \rfloor \}
	\]
	for $k=0,1,\dots,5$, where the $\mathbf{x}_i$'s are chosen as the first points from the Halton sequence as generated by Matlab's \texttt{haltonset(2)} function. As test set we choose a $50 \times 50$ uniform grid on $[-1,1]^2$.

 	For MTSM (including STSM), we choose radial basis functions interpolation from \cite{Beckert2001MultivariateIF,biancolini2017fast} based on the multiquadric radial basis function in \cite[Chapter 2.2]{biancolini2017fast} to approximate the vector-valued function in the offline stage. For this, we used the implementation from \url{https://github.com/joslorgom/RBFinterp}. For all training sets, we use $L = -4$ as lower bound of the sectional curvature of $\mathcal{P}_3$~\cite[Proposition I.1]{cris2020sectioncurvature}; with this choice the stopping criterion in line \ref{alg:crite_anchor} of~\cref{alg:aco2} terminated at $R=3$ anchor points.

	\begin{figure}[tbp]
		\footnotesize
		\centering
		\includegraphics[width=1\linewidth]{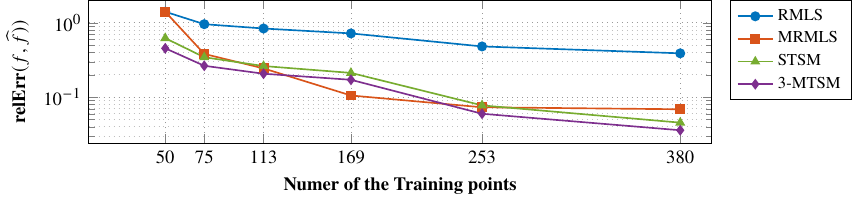}
		\caption{The maximum relative error \cref{eqn_relerr_matrix} of the function~\cref{spd_fun} approximated by RMLS, MRMLS, STSM and $3$-MTSM. The vertical axis is displayed in a logarithmic scale. The setup is described in \cref{subsec:spdfun}.}\label{fig:SPD}
	\end{figure}

	\Cref{fig:SPD} shows the relative error of the function~\cref{spd_fun} by the different methods for increasingly larger training sets. We observe the relative approximation error of MTSM is approximately one order of magnitude lower than that of RMLS, and similar to those of MRMLS and STSM.

	The computation time of the offline stage in MTSM is approximately $0.20$ seconds, while STSM requires about $0.022$ seconds. For the online stage based on the largest training set $S_5$, predicting the $2500$ test points takes less than $0.5$ seconds for both RMLS and STSM. By constrast, MTSM takes around $4$ times as long ($2$ seconds) and MRMLS approximately $9$ times as long ($\approx 4.5$ seconds). 

	\subsubsection{Special orthogonal group}\label{subsubsec:SO3} Using MTSM, multivariate Hermite interpolation in a single tangent space (THI) from \cite[Section~4]{Zimmermann2024multihermi}, and barycentric Hermite interpolation (BHI) from \cite[Section~3]{Zimmermann2024multihermi} we will approximate the function from \cite[Section~5.2]{Zimmermann2024multihermi}:
	\begin{equation}\label{eq:rota}
			f: \Rr^2 \longrightarrow \mathcal{SO}(3),\quad  \left(x_1, x_2\right) \longmapsto \exp _m H\left({x}_1, {x}_2\right),
	\end{equation}
	where $\mathcal{SO}(3) = \{ Q \in \Rr^{3\times3} \mid Q^T Q = Q Q^T = I, \mathrm{det}(Q) = 1 \}$ is the \emph{special orthogonal group}, which we view as an embedded Riemannian submanifold of the Euclidean space $\Rr^{3\times 3}$; $\cdot^T$ denotes the matrix transpose; $\mathrm{exp}_m$ is the classic matrix exponential; and
	\begin{equation*}
		H(x_1, x_2)= \begin{pmatrix}
			0 & {x}_1^2+\frac{1}{2}{x}_2 & \sin \left(4 \pi\left({x}_1^2+{x}_2^2\right)\right) \\[2pt]
			-{x}_1^2-\frac{1}{2} {x}_2 & 0 & {x}_1+ {x}_2^2 \\[2pt]
			-\sin \left(4 \pi\left({x}_1^2+{x}_2^2\right)\right) & -{x}_1- {x}_2^2 & 0
		\end{pmatrix}.
	\end{equation*}
	In this geometry of $\mathcal{SO}(3)$, the exponential and logarithmic maps are, respectively,
	 	\begin{equation*}
	 			\operatorname{Exp}_P(\dot{P})=P \exp_m (P^{T} \dot{P})
	 			\quad\text{and}\quad
	 			\operatorname{Log}_P(Q)=P\log_m ( P^{T}Q ),
	\end{equation*}
	where $P, Q \in \mathcal{SO}(3)$; $\dot{P} \in T_P{\mathcal{SO}(3)}$; and $\mathrm{log}_m(\cdot)$ is the classic matrix logarithm.

    We will investigate the performance of the methods on two training sets. We use a $7 \times 7$ Chebychev grid on the domain $[-0.5,0.5]^2$ and its corresponding function values (represented as $3\times 3$ matrices) as first training set $S_1$, and a $14 \times 14$ Chebychev grid on $[-1,1]^2$ and its corresponding function values as a second training set $S_2$. As test sets, we respectively use a $20 \times 20$ uniform grid on $[-0.5,0.5]^2$, and a $20 \times 20$ uniform grid on $[-1,1]^2$.

    The implementation details for BHI and THI are outlined in \cite[Section~5.2]{Zimmermann2024multihermi}. We used their Matlab implementation from \url{https://github.com/RalfZimmermannSDU/MultivarHermiteManifoldInterp_SISC}. As for THI, we randomly sampled one of the training points on the manifold as the location of the tangent space.
    For MTSM, we approximate the vector-valued function $\widehat{g}_j(x)$ using multivariate Hermite interpolation, as detailed in \cite[Section 4]{Zimmermann2024multihermi}. In this configuration, MTSM can be viewed as an extension of THI, but now using multiple tangent spaces. Running \cref{alg:aco2}, it decides to use $R=2$ tangent spaces for both training sets when $L=-1$ is supplied as lower curvature bound.
    
	\begin{figure}[tbp]
		\footnotesize
		\centering
		\begin{subfigure}[b]{1\textwidth}
			\includegraphics[width=1\linewidth]{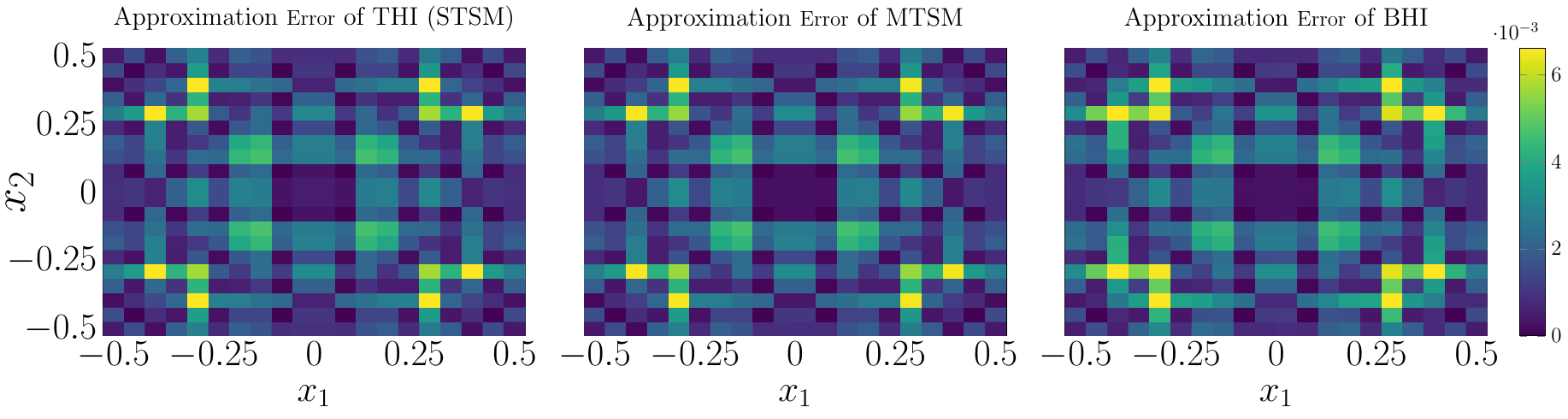}
			\caption{The $20\times20$ uniform grid on $[-0.5,0.5]^2$.}
		\end{subfigure}
		\\
		\begin{subfigure}[b]{1\textwidth}
			\includegraphics[width=1\linewidth]{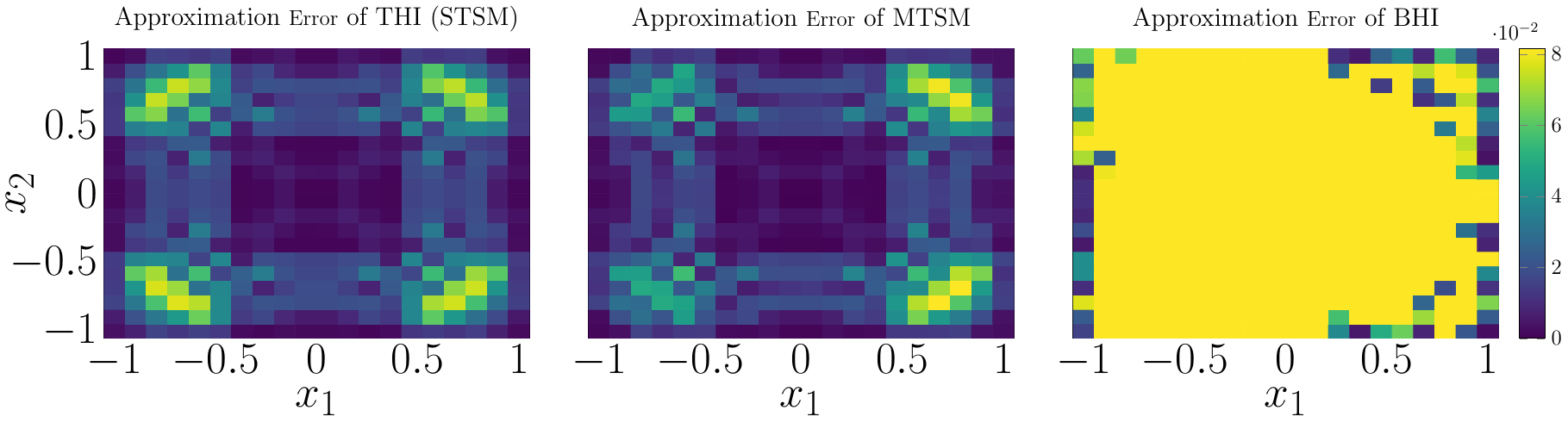}
			\caption{The $20\times20$ uniform grid on $[-1,1]^2$.}
		\end{subfigure}
		\caption{The relative error \cref{eqn_relerr_matrix} for the $\mathcal{SO}(3)$-valued function \cref{eq:rota}, approximated in different domains by different models. The scale used is the same for all panels in subfigure (a), and likewise for subfigure (b). The setup is described in \cref{subsubsec:SO3}.}\label{fig:SO3-valued}
	\end{figure}

	\Cref{fig:SO3-valued} shows the relative error \cref{eqn_relerr_matrix} for approximating $f$ in \cref{eq:rota} of $3$-MTSM, BHI and THI for approximating the $\mathcal{SO}(3)$-valued function~\cref{eq:rota}. When the function~\cref{eq:rota} is defined on $[-0.5,0.5]^2$, the approximation errors of $2$-MTSM, THI, and BHI are very similar. However, on $[-1,1]^2$, BHI has a significantly higher approximation error compared to both THI and $2$-MTSM, whose results are rather similar.

	The computation time for the offline stage using the $2$-MTSM is approximately $487$ seconds, whereas THI requires only about $197$ seconds to build the model. In the online stage, predicting the $400$ test points on the $[-1,1]^2$ domain takes about $38$ seconds for THI, $90$ seconds for $2$-MTSM, and $2$ hours for BHI.
	\subsection{Parametric model order reduction}\label{subsec:pMOR}

	The goal of parametric model order reduction (pMOR) is to replace a complicated system by a simpler one that can be computed efficiently. This is often achieved by first sampling a set of parameters, then computing reduced order models at these points, and finally using some form of (quasi-)interpolation to extend the approximation to the entire parameter domain.
	
	We consider a linear time-varying dynamical system parameterized with parameter $\mathbf{p} \in \Omega \subset \Rr^d$ (usually, $\Omega$ is a bounded domain):
	\begin{equation}\label{Eq:1}
		\begin{aligned} 
			{E}(\mathbf{p}) \dot{\mathbf{x}}(t; \mathbf{p}) &= {A}(\mathbf{p}) \mathbf{x}(t ; \mathbf{p}) + {B}(\mathbf{p}) \mathbf{u}(t), \quad \text { with } \quad \mathbf{x}(0; \mathbf{p})=\mathbf{0}, \\
			\mathbf{y}(t ; \mathbf{p}) &= {C}(\mathbf{p}) \mathbf{x}(t ; \mathbf{p}),
		\end{aligned}
	\end{equation}
	where $t \in [0, \infty)$ and we used the dot-notation $\dot{\mathbf{x}}(t; \mathbf{p})$ to denote the time-derivative $\frac{\mathrm{d}}{\mathrm{d}t} \mathbf{x}(t; \mathbf{p})$. The state variables $\mathbf{x}(t ; \mathbf{p}) \in \Rr^n$, are determined by the given inputs $\mathbf{u}(t) \in \Rr^m$, the matrices ${A}(\mathbf{p}), E(\mathbf{p}) \in \Rr^{n \times n}$, ${B}(\mathbf{p}) \in \Rr^{n \times m}$, and ${C}(\mathbf{p}) \in \Rr^{q \times n}$. The output of the system is $\mathbf{y}(t ; \mathbf{p}) \in \Rr^q$.
	
	The idea of pMOR is to approximate the original state variables inside of a low-dimensional vector subspace, i.e., $\mathbf{x}(t;\mathbf{p}) \approx {V}(\mathbf{p})\mathbf{x}_r(t;\mathbf{p})$, where $\mathbf{x}_r(t; \mathbf{p}) \in \Rr^r$ and ${V}(\mathbf{p}) \in \Rr^{n \times r }$ has orthogonal columns. Then, multiplying~\cref{Eq:1} from the left by a matrix with orthonormal rows ${W}(\mathbf{p})^T \in \Rr^{n \times r}$, we obtain the reduced system:
	\begin{align*}
		{E}_r(\mathbf{p}) \dot{\mathbf{x}}_r(t ; \mathbf{p}) &= {A}_r(\mathbf{p}) \mathbf{x}_r(t ; \mathbf{p}) + {B}_r(\mathbf{p}) \mathbf{u}(t), \quad \text { with } \quad \mathbf{x}_r(0 ; \mathbf{p})=\mathbf{0}, \\
		\mathbf{y}_r(t ; \mathbf{p}) &= {C}_r(\mathbf{p}) \mathbf{x}_r(t; \mathbf{p}),
	\end{align*}
	where the reduced matrices are
	\begin{align*}
	 {E}_r(\mathbf{p}) &:= {W}(\mathbf{p})^T {E}(\mathbf{p}) V(\mathbf{p}) \in \Rr^{r\times r}, &&&
	 {B}_r(\mathbf{p}) &:= {W}(\mathbf{p})^T {B}(\mathbf{p}) \in \Rr^{r \times m}, \\
	 {A}_r(\mathbf{p}) &:= {W}(\mathbf{p})^T {A}(\mathbf{p}) V(\mathbf{p}) \in \Rr^{r\times r}, &&&
	 {C}_r(\mathbf{p}) &:= {C}(\mathbf{p}) {V}(\mathbf{p}) \in \Rr^{q \times r}.
	\end{align*}
	Without going into further detail, substituting $V(\mathbf{p}) \mapsto V(\mathbf{p})Q(\mathbf{p})$, where $Q(\mathbf{p})$ is a square orthogonal matrix, alters only the representation of the reduced system but not its approximation properties. The pMOR approximation problem is hence intrinsically one on the \emph{Grassmannian}, which is the manifold of all linear subspaces of a fixed dimension, namely
	\[
	\mathcal{G}(n,r) := \{ \mathrm{span}(Q) \mid Q \in \mathrm{St}(n,r) \}, \;\text{ where }\;
	\mathrm{St}(n,r) := \{ Q \in \Rr^{n \times r} \mid Q^T Q = I_r \},
	\]
	rather than on the \emph{Stiefel manifold} $\mathrm{St}(n,r)$ of matrices with orthonormal columns. Elements of the Grassmannian have a natural representation on $\mathrm{St}(n,r)$: the subspace $[Q] := \mathrm{span}(Q) \in \mathcal{G}(n,r)$ can be represented by any orthonormal basis $Q \in \mathrm{St}(n,r)$ of it. Detailed information on the exponential map on the Grassmannian can be found in \cite{Edelman1998Grassexp}, while the logarithmic map can be approximated by the algorithm in \cite[Algorithm 7.10]{zimmermann2021manifold}. The tangent space at a point $P \in \mathcal{G}(n, k)$ is
	\[
	T_P \mathcal{G}(n, k)=\left\{X \in \mathbb{R}^{n \times k} \mid X^{T} P+P^{T} X=0\right\}.
	\]

	We can view pMOR as a problem of approximating the manifold-valued function
	\begin{align*}
			f : \Rr^d \supset \Omega  \longrightarrow \mathcal{G}(n,r) \times \mathcal{G}(m, r), \quad \mathbf{p}  \longmapsto \bigl( [V(\mathbf{p})], [W(\mathbf{p})] \bigr)
	\end{align*}
	such that the solution of the reduced system is a good approximation of the solution of the original system. With an approximation $\widehat{f}$ of $f$, we can efficiently construct and solve the projected small-scale linear dynamical system, yielding the coordinates of the solution $\mathbf{x}(t, \mathbf{p})$ relative to the chosen matrix with orthonormal columns $V(\mathbf{p})$ representing the approximating subspace $[V(\mathbf{p})]$.

	To measure the quality of reduced order models for linear dynamical systems, the \emph{transfer function}
	\[
	{G}(s)
	:= {G}\left(s, \mathbf{p}\right)
	:= {C}\left(\mathbf{p}\right)\left(s {E}\left(\mathbf{p}\right)-{A}\left(\mathbf{p}\right)\right)^{-1} {B}\left(\mathbf{p}\right)
	\]
	is used in the frequency domain \cite{benner2015pmor}. 
	The transfer function of the of reduced system is denoted by ${G}_r(s)$.
	The relative $\mathcal{H}_\infty$-norm \cite{12} is used to measure the accuracy of the reduced-order systems:
	\begin{equation} \label{eqn_relerr_transfer}
	\mathrm{relErr}(G, G_r) := \frac{\left\|{G}-{G}_r\right\|_{\mathcal{H}_\infty}}{\left\|{G}\right\|_{\mathcal{H}_\infty}},
	\quad\text{where}\quad
	\left\| {F} \right\|_{\mathcal{H}_\infty} = \sup_{\omega \in \Rr} \|{F}(\imath \omega) \|_\infty
	\end{equation}
   and $\imath^2 = -1$ is the imaginary unit.\footnote{We compute it using the \texttt{norm(system, `Inf')} function in Matlab's Control System Toolbox.}
	
 	To evaluate the performance of MTSM for solving pMOR problems, we investigate two model problems and compare MTSM's accuracy and computational time with RMLS, STSM, and the interpolatory projection method (INP) from \cite{baur2011pmor}. The latter is implemented in the Matlab psssMOR toolbox \cite{psssmor}. Note that INP was specifically developed to address pMOR problems.

	 The anchor point of STSM was sampled randomly with uniform probability from our training set, as in \cite[Chapter 7]{zimmermann2021manifold}. The vector-valued function $g_j$ in MTSM is approximated by two different schemes: component-wise multivariate linear interpolation, and component-wise multivariate tensorized interpolation, which extends a univariate approximation scheme to the multivariate case through a tensor decomposition, as explained in \cite{simon2024approx,strossner2024approximation}.

	\subsubsection{Anemometer model}\label{subsec:Anemo}
	We consider the anemometer model, a flow sensing device \cite{moosmann2007paramor} modeled by the convection-diffusion partial differential equation
	\begin{equation*}
		\rho c \frac{\partial T}{\partial t}=\nabla \cdot(\kappa \nabla T)-\rho c v \nabla T+\dot{q},
	\end{equation*}
	where $\rho$ is the mass density, $c$ is the specific heat capacity, $\kappa$ is the thermal conductivity, $v$ is the fluid velocity, $T$ is the temperature, and $\dot{q}$ is the heat flow into the system caused by the heater, $\nabla$ denotes the gradient of the $T$ and $\nabla \cdot$ denotes the divergence $\kappa \nabla T$. In 1D space, this equation models how the temperature $T$ changes over time $t$ and space $x$ due to the combined effects of convection (movement with the fluid flow) and diffusion (spreading due to temperature gradients).
	This equation has been discretized with finite differences, yielding a parameterized linear dynamical system in state-space form with a single output \cite{morwiki_anemom}:
	\[
	\begin{aligned}\label{eq:ame}
		{E} \dot{\mathbf{x}}(t) &= \left( {A}_1+ {p}\left({A}_2-{A}_1\right)\right) \mathbf{x}(t) + {B}\mathbf{u}(t) \\
		{y}(t) & = C \mathbf{x}(t)
	\end{aligned}
	\]
	where $\mathbf{p} \in \Rr$ is the parameter, and the matrices ${A}_{1}, A_2, {E} \in \Rr^{n \times n}, B \in \Rr^{n \times 1}, C \in \Rr^{1 \times n}$, where $n=29\,008$.

	As training set, we used $200$ points $x_i$ uniformly spaced in $[0.5, 1.5]$ and used the rational Krylov projection method from \cite{beattie2017mor} to obtain approximating $20$-dimensional vector spaces $[V(\mathbf{p}_i)]$ and $[W(\mathbf{p}_i)]$, both in the Grassmannian $\mathcal{G}(29008, 20)$. As test set, we generate $100$ points uniformly spaced between $0.5$ and $1.5$.
	
	The vector-valued function in MTSM (and STSM) is approximated by multivariate linear interpolation. Supplying $L = -4$ as lower sectional curvature bound, \Cref{alg:aco2} applied to the training set decides to chose $R=10$. 

	\begin{figure}
		\centering
		\includegraphics[width=1\linewidth]{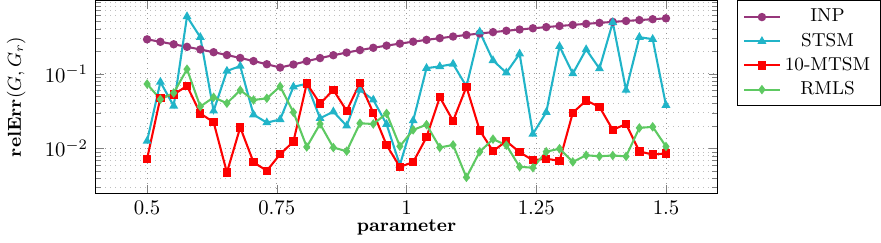}
		\caption{The relative error \cref{eqn_relerr_transfer} between the original anemometer system and its reduced system computed by INP, STSM, $3$-MTSM, and RMLS. The vertical axis uses a logarithmic scale to show the range of relative error values more clearly. The setup is described in \cref{subsec:Anemo}.}\label{fig:anemo1d}
	\end{figure}
	\Cref{fig:anemo1d} displays the relative error \cref{eqn_relerr_transfer} for the different methods.
	It clearly shows that all manifold-valued approximation methods outperform the matrix interpolation method \cite{baur2011pmor}.
	Comparing $10$-MTSM with STSM, we see a substantial advantage in approximation accuracy of MTSM.

	While the accuracy of MTSM is comparable to that of RMLS, as we see in \cref{fig:anemo1d}, $10$-MTSM requires the computation of the weighted Fr\'echet mean of ten points in the online stage, whereas RMLS computes it for all points with nonzero weights, resulting in significantly higher computational costs.
    Specifically, 10-MTSM's offline stage takes approximately $602$ seconds, with around $572$ seconds spent on selecting the $10$ anchor points, and $30$ seconds to approximate the vector-valued functions. By contrast, STSM requires only about $3$ seconds in the offline stage. For the online stage, predicting the bases $[V(\mathbf{p}_i)]$ and $[W(\mathbf{p}_i)]$ at $100$ test points takes about $220$ seconds with $10$-MTSM, about $2.7$ seconds with STSM, $1.1$ seconds with INP, and about $1300$ seconds with RMLS.

	\subsubsection{Microthruster unit model}\label{subsec:pmor3d}

	We consider the 3D microthruster unit model from \cite{PMOR.2}. It describes thermal conduction in a semiconductor chip, offering flexibility in the boundary conditions to simulate environmental temperature changes.
	Discretization leads to a system of linear ordinary differential equations:
	\begin{equation*}
		\begin{split}
		{E} \dot{\mathbf{x}}(t) &= \left({A}-\mathbf{p}(1) {A}_{\text{top}} - \mathbf{p}(2) {A}_{\text {bottom}} - \mathbf{p}(3) {A}_{\text{side}}\right) \mathbf{x}(t)+ \mathbf{b}, \\
		\mathbf{y}(t) &= {C}\mathbf{x}(t),
		\end{split}
	\end{equation*}
	where the heat capacity matrix ${E} \in \Rr^{n \times n}$ and the heat conductivity matrix ${A} \in \Rr^{n \times n}$ are symmetric sparse matrices of order $n=4257$, $\mathbf{b}\in\Rr^{n}$ is the load vector, ${C} \in \Rr^{1 \times n}$ is the output matrix, ${A}_{\text{top}}$, ${A}_{\text{bottom}}$, and $A_{\text{side}}$ are real $n\times n$ diagonal matrices originating from the discretization of the convection boundary conditions.
	
	As training set, we generate $343$ points $\mathbf{p}_i$ on a $7\times7\times 7$ Chebyshev grid on $[10,1000]^3$ and compute the corresponding projection spaces $[{V}(\mathbf{p}_i)]$ and $ [{W}(\mathbf{p}_i)]$ $ \in \mathcal{G}(4257,40)$ as the rational Krylov subspaces computed by the method of \cite{beattie2017mor}. The test sets are the uniform grids with dimensions $k\times k\times k$ for $k=1,2,\dots,8$ on $[10,1000]^3$. Evaluating RMLS took almost $2$ hours on the $8\times8\times8$ uniform test grid, so we did not try $k=9$.
		
	The vector-valued function in MTSM and STSM is approximated by two schemes.
	In STSM(ho) and MTSM(ho), we use the multivariate tensorized method with ST-HOSVD approximation \cite{ST-HOSVD}.
	The ST-HOSVD is implemented with \texttt{hosvd} function in the Tensor Toolbox v3.6 \cite{tensor_toolbox_2023} using the truncation ranks $(90, 30, 7, 7, 7)$ and mode processing order $(1,2,3,4,5)$.
	In STSM(lin) and MTSM(lin) we use multivariate linear interpolation.
	\Cref{alg:aco2} applied to the training set selected $R=2$ anchor points when $L=-1$ is supplied as lower curvature bound. INP is constructed through the psssMOR toolbox using the same training set.
	
	\begin{figure}[t]
		\centering
		\includegraphics[width=1\linewidth]{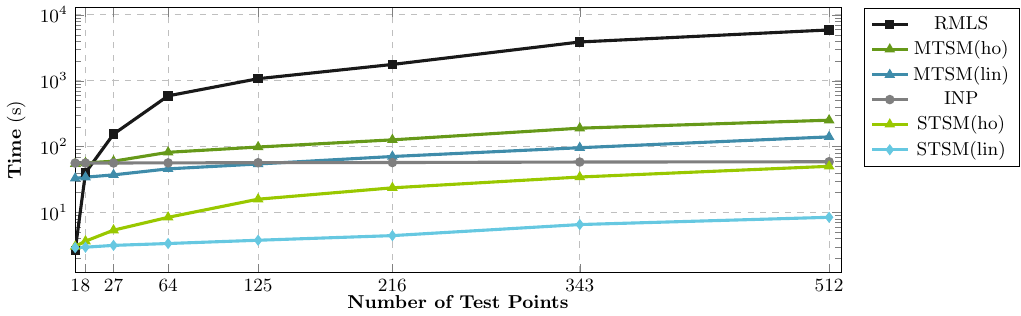}
		\caption{The computational time for approximating $\mathbf{p} \mapsto \bigl( [V(\mathbf{p})], [W(\mathbf{p})] \bigr)$ for solving the Microthruster 3D model based on different number of test points by the methods described in \cref{subsec:pmor3d}. The vertical axis is displayed in a logarithmic scale.}
		\label{fig:timepmor3d}
	\end{figure}

	The computation time for predicting the Krylov subspaces with varying numbers of test points is plotted in \cref{fig:timepmor3d}. The intercept with the vertical axis represents the time taken to perform the offline stage, i.e., to build the models. RMLS has no offline setup cost, while MTSM and STSM need to approximate the vector-valued pullback functions and INP needs to compute the global basis. The time of MTSM is higher than for STSM as we need to compute the anchor points by~\cref{alg:aco3}. These account for the differences in execution time of the offline stage.

	As for the online stage, we see from \cref{fig:timepmor3d} that there is a significant difference between RMLS and all the other methods.
 	RMLS needs to compute the weighted Fr\'echet mean of all points on the Grassmannian with non-zero weights, which results in the highest computational cost. This is mainly attributable to the high cost of computing the gradient of weighted Fr\'echet mean. For $2$-MTSM this cost is significantly reduced because the Fr\'echet mean of at most $2$ points on the Grassmannian will be required. STSM relies on only one evaluation of the exponential map, avoiding the expensive computation of the Fr\'echet mean. This mostly explains the performance difference of about $60\%$ between STSM and $2$-MTSM.

	\Cref{tab:errorpmor3d} presents the maximum and geometric mean value of the relative error \cref{eqn_relerr_transfer} on the largest test set. All methods show comparable performance in terms of accuracy on average. However, we see that the maximum error is significantly larger for INP compared to the manifold-based function approximation methods. All of the MTSM-type methods have a better accuracy than all of the TSMs.

	\begin{table}[tb] \footnotesize
	\centering
	\begin{tabular}{lcc}
		\toprule
		\multicolumn{1}{c}{\textbf{Method}} &\multicolumn{2}{c}{\textbf{Relative error}}  \\
		\cmidrule{2-3}
		&Maximum  & Geometric mean       \\
		\midrule
		INP        &$2.6 \cdot 10^{-1}$    & $7.5  \cdot 10^{-3}$    \\

		RMLS       &$1.6  \cdot 10^{-2}$    & $3.2  \cdot 10^{-3}$     \\
		STSM(lin) &$5.4 \cdot 10^{-2}$   &$3.7 \cdot 10^{-3}$   \\
 		STSM(ho)  &$1.0  \cdot 10^{-1}$   &$5.3  \cdot 10^{-3}$       \\
		$2$-MTSM(lin) &$\mathbf{1.2  \cdot 10^{-2}}$    & $\mathbf{3.1 \cdot 10^{-3}}$     \\
     	$2$-MTSM(ho)  &$4.5 \cdot 10^{-2}$  & $3.4 \cdot 10^{-3}$      \\
		\bottomrule
	\end{tabular}
	\caption{The geometric mean and maximum of the relative error \cref{eqn_relerr_transfer} for computing $512$ test points as described in \cref{subsec:pmor3d}.}\label{tab:errorpmor3d}
	\end{table}

	Taking both accuracy and computational efficiency into account, we conclude that MTSM is competitive with the speed of INP and STSM while providing the approximation accuracy of RMLS. This makes it an effective approach for generating qualitative projection spaces for pMOR.

	\section{Conclusions}\label{sec:conc}
	In this paper, we proposed MTSM in \cref{def_mtsm}, a novel function approximation scheme that approximates a manifold-valued function $f$ as in \cref{eqn_f} from samples \cref{eqn_samples} by smoothly mixing predictions of multiple STSMs through the weighted Fr\'echet mean. It combines the computational efficiency of STSM \cite{simon2024approx} and the global approximation ability of the Riemannian moving least squares method \cite{Grohs17}. The numerical experiments show that MTSM can solve different types of problems.
	The \emph{offline} stage in which the model is built has a relatively acceptable cost, provided sufficiently many model queries are made during the online stage. The \emph{online} stage is very cheap compared to RMLS and MRMLS, especially for a large number of points.
	
	Two avenues of further research can be pursued. First, the choice of the location of the anchor points deserves a deeper study. On the one hand, cheaper schemes than Riemannian $k$-means clustering are of interest to accelerate the sometimes costly offline stage. On the other hand, optimizing the location of the anchors through Riemannian optimization could increase the accuracy of MTSM.
	Second, if the samples \cref{eqn_samples} can be chosen as part of the approximation scheme, the (adaptive) choice of these points will affect the approximation accuracy. How the samples should be chosen to satisfy a desired error bound or error decay with MTSM is an open question.

\appendix
\section{Proof of \cref{thm_mtsm_smooth}}\label{proof}
	Our main source for the material to prove \cref{thm_mtsm_smooth} in the remainder of this section is the book by Petersen \cite{Petersen}.
	
		Let $(\mathcal{M},\mathbf{g})$ be a Riemannian manifold. A (smooth) \emph{vector field} $X : \mathcal{M} \to T\mathcal{M}$ on a smooth manifold $\mathcal{M}$ is a smooth section of the \emph{tangent bundle} $T\mathcal{M} = \{ (p,v_p) \mid p\in\mathcal{M}, v_p \in T_p \mathcal{M} \}$, i.e., it is a smooth assignment to $p$ of a tangent vector $X(p) \in T_p \mathcal{M}$. An example of a vector field is the gradient (field) of a smooth function $f : \mathcal{M} \to \mathbb{R}$. Recall that the \emph{gradient} $\nabla f$ at $p\in\mathcal{M}$ is the tangent vector of $T_p \mathcal{M}$ that is dual to the derivative $\mathrm{d}_p f: T_p \mathcal{M} \to \mathbb{R}$ under the identification of linear forms and vectors via the inner product $\mathbf{g}_p$ on $T_p \mathcal{M}$. The first-order necessary condition for a function $f : \mathcal{M} \to \mathbb{R}$ to have a minimum at $p$ is that the gradient $(\nabla f)(p) = 0$; see, e.g., \cite{manoptAb,mnoptbo}.
	
		The \emph{covariant derivative} allows vector fields to be differentiated. Intuitively, this can be understood as follows. Let $p\in\mathcal{M}$ be a point and $X, Y$ smooth vector fields. There is a unique smooth curve $\gamma(t)$ with $\gamma(0)=p$ and $\gamma'(0)=X(p)$, obtained by integrating the flow defined by $X$ from $p$. The covariant derivative $\nabla_Y X$ is then measuring at $p$ how the initial tangent vector $X(p)$ is moving if $p$ is moved infinitesimally in the direction of $Y(p)$. There is a preferred covariant derivative for Riemannian manifolds.
		This \emph{Levi--Civita connection} $\nabla : T\mathcal{M} \times T\mathcal{M} \to T\mathcal{M}$ is the unique covariant derivative that is torsion-free and compatible with the metric in the sense of \cite[Theorem 2.2.2]{Petersen}.
	
		The connection $\nabla$ applied to the gradient $\nabla f$ results in the \emph{Riemannian Hessian} $\nabla^2 f$ of $f$ by \cite[Proposition 2.2.6]{Petersen}. Evaluated at $p$, this object can be viewed as a self-adjoint linear endomorphism on $T_p \mathcal{M}$, i.e., $(\nabla^2 f)(p) : T_p\mathcal{M} \to T_p \mathcal{M}$. Given two self-adjoint linear endomorphisms $A, B$, we write $A \succcurlyeq B$ (resp., $A \succ B$) to mean that the eigenvalues of $A - B$ are all positive (resp., strictly positive). In particular, $A \succcurlyeq 0$ (resp., $A \succ 0$) means that $A$ is positive semidefinite (resp., positive definite). The second-order necessary condition for a function $f$ to have a minimum at $p$ is that $(\nabla^2 f)(p) \succcurlyeq 0$; see, e.g., \cite{manoptAb,mnoptbo}.
	
		The Nash--Moser implicit function theorem is an advanced but standard result in differential geometry; see, e.g., Hamilton \cite{Hamilton1982} for an overview---Theorem 3.3.4 specifically is a suitable version for our purpose. An accessible version for Riemannian manifolds was presented in full detail by Bergmann and Zimmermann \cite{Zimmermann2024multihermi}. In our notation, the latter can be phrased as follows.
	
		\begin{theorem}[{\protect Essentially \cite[Theorem 6]{Zimmermann2024multihermi}}] \label{thm_implicit_function}
	     Let $(\mathcal{M},\mathbf{g})$ be a Riemannian manifold of dimension $d$, and let $G: \mathcal{M} \times \mathbb{R}^n \rightarrow T \mathcal{M}$ be a smooth vector field on an open subset $\mathcal{U} \subset \mathcal{M}\times\mathbb{R}^n$.
	     If there is a $\left(p^*, x^*\right) \in \mathcal{U}$ such that
	     \begin{enumerate}
	      \item $G\left(p^*, x^*\right)= 0 \in T_{p^*}\mathcal{M}$, and
	      \item the covariant derivative $\nabla G^*$ at $p$ of the smooth vector field $G_{x^*} := G|_{\mathcal{M},\{x^*\}}$ has rank $d$,
	     \end{enumerate}
		 then there exists an open neighborhood $\Omega \subset \mathbb{R}^d$ of $x^*$ and a smooth map $\widehat{q}: \Omega \rightarrow \mathcal{M}$ that solves the implicit equation $G(\widehat{q}(x),x) = 0$.
	     \end{theorem}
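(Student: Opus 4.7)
The plan is to reduce the statement to the classical (Euclidean) implicit function theorem by working in local coordinates on $\mathcal{M}$ around $p^*$. Concretely, I would fix normal coordinates centered at $p^*$: since $\mathrm{Exp}_{p^*} : T_{p^*}\mathcal{M} \to \mathcal{M}$ is a local diffeomorphism near $0$, there is a radius $r > 0$ such that $\phi := \mathrm{Exp}_{p^*}^{-1}$ identifies an open geodesic ball $\mathcal{B}_r(p^*)$ with an open subset $\widetilde{U} \subset T_{p^*}\mathcal{M} \cong \mathbb{R}^d$, sending $p^*$ to $0$. Simultaneously, I would trivialize the tangent bundle $T\mathcal{M}|_{\mathcal{B}_r(p^*)}$, e.g.\ via the coordinate vector fields $\partial_1,\dots,\partial_d$ induced by $\phi$, so that $G$ is represented by a smooth map $\widetilde{G}: \widetilde{U} \times \mathbb{R}^n \supset \widetilde{\mathcal{U}} \to \mathbb{R}^d$ sending $(\xi,x)$ to the coordinate vector of $G(\phi^{-1}(\xi),x) \in T_{\phi^{-1}(\xi)}\mathcal{M}$.

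Step 2 is to apply the classical implicit function theorem to $\widetilde{G}$ at the point $(0, x^*)$, which provides an open neighborhood $\Omega \ni x^*$ and a smooth $\widehat{q}_{\mathrm{loc}}: \Omega \to \widetilde{U}$ with $\widetilde{G}(\widehat{q}_{\mathrm{loc}}(x), x) = 0$, provided the partial Jacobian $\partial_\xi \widetilde{G}(0, x^*) \in \mathbb{R}^{d \times d}$ is invertible. Setting $\widehat{q} := \phi^{-1} \circ \widehat{q}_{\mathrm{loc}}$ then yields a smooth map into $\mathcal{M}$ that solves the implicit equation $G(\widehat{q}(x),x) = 0$ in a neighborhood of $x^*$.

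The crux of the argument is step~3, translating the coordinate-free hypothesis into the invertibility of $\partial_\xi \widetilde{G}(0, x^*)$. Writing $\widetilde{G}^i$ for the components of $\widetilde{G}$ and $\Gamma^i_{jk}$ for the Christoffel symbols of the Levi--Civita connection in the chosen coordinates, the standard component formula for the covariant derivative of a vector field $V = V^k \partial_k$ reads
\[
    (\nabla_{\partial_j} V)^i = \partial_j V^i + \Gamma^i_{jk}\, V^k.
\]
Applied at $p^*$ to the vector field $G_{x^*}$, and using hypothesis~(i) that $G_{x^*}(p^*) = G(p^*, x^*) = 0$, the Christoffel term $\Gamma^i_{jk}(0)\, \widetilde{G}^k(0,x^*)$ vanishes identically. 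Hence $\nabla G^*$ at $p^*$, expressed as a linear endomorphism of $T_{p^*}\mathcal{M}$ in the basis $\{\partial_j\}$, reduces to the matrix $\bigl[\partial_j \widetilde{G}^i(0,x^*)\bigr]_{i,j}$. Thus the rank-$d$ hypothesis~(ii) is exactly the invertibility condition required by the Euclidean implicit function theorem.

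The main obstacle is step~3: one must be careful that the identification between the intrinsic covariant derivative and the ordinary coordinate Jacobian relies essentially on the vanishing of $G$ at $p^*$. Away from a zero of a vector field the two notions differ by connection terms, and the result would become chart-dependent; it is precisely hypothesis~(i) that makes the intrinsic rank condition coincide with the Euclidean one and lets the standard implicit function theorem do the remaining work. A minor additional point is to verify that the choice of trivialization of $T\mathcal{M}|_{\mathcal{B}_r(p^*)}$ is immaterial: two smooth local frames differ by a smooth $\mathrm{GL}(d)$-valued change of basis, which preserves both smoothness of $\widetilde{G}$ and the rank of its Jacobian at $(0,x^*)$.
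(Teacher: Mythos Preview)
The paper does not actually prove this theorem; it is quoted from \cite[Theorem~6]{Zimmermann2024multihermi} (with a pointer to Hamilton's Nash--Moser survey as background) and stated without proof as a tool for the proof of \cref{thm_mtsm_smooth}. So there is no ``paper's own proof'' to compare against.

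Your argument is correct and is the standard reduction: pass to a chart, trivialize the tangent bundle, and apply the Euclidean implicit function theorem. The key step~3 is exactly right---at a zero of a vector field, the covariant derivative and the coordinate Jacobian agree because the Christoffel correction $\Gamma^i_{jk}V^k$ vanishes, making hypothesis~(ii) chart-independent and equivalent to the invertibility required by the classical theorem. One minor simplification you could make: since you chose \emph{normal} coordinates centered at $p^*$, the Christoffel symbols $\Gamma^i_{jk}(0)$ already vanish at the center regardless of hypothesis~(i), so in that particular chart you do not even need to invoke $G(p^*,x^*)=0$ to identify the covariant derivative with the Jacobian. Your more general observation (valid in any chart) is of course the conceptually cleaner one.
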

	
	     We now have the necessary background to prove \cref{thm_mtsm_smooth}.
	
		\begin{proof}[Proof of \cref{thm_mtsm_smooth}]
		Consider the map
		\[
		 G : \mathcal{M} \times \Omega \longrightarrow T\mathcal{M}, \quad
		 (p,x) \longmapsto \frac{1}{2} \sum_{j=1}^R \varphi_{j}( \widehat{f}_j(x) ) \cdot \nabla \dist_{\mathcal{M}}(p, \widehat{f}_j(x))^2,
		\]
		which for a fixed $x$ takes $(p,x)$ to the gradient (in the first variable) of the objective function inside the $\argmin$ in \cref{eqn_mtsm}, scaled by $\frac{1}{2}$. Herein, $\nabla \dist_{\mathcal{M}}(p, \widehat{y}_j)^2$ is the gradient of the squared distance function from $p$ to the fixed point $\widehat{y}_j := \widehat{f}_j(x)$. As $G$ is a linear combination of gradient fields with smooth functions as coefficients, it follows that $G$ is a smooth vector field.
	
		If follows from \cref{thm_rtsm_unique} that for every $x\in\Omega$ there exists a global minimizer $q(x)$ of the minimization problem in \cref{eqn_mtsm}. The first-order necessary condition for optimality implies that $G(q(x),x) = 0$.
		Let $r_j(p) = \dist_{\mathcal{M}}(p, \widehat{y}_j)$.
		Fixing $x$, we see that the covariant derivative of $G_x = G|_{\mathcal{M},x}$ satisfies
		\[
		 \nabla G_x = \sum_{j=1}^R \varphi_{j}( \widehat{y}_j ) \cdot \nabla^2 \frac{1}{2} r_j^2.
		\]
		If we could prove that $\mathrm{rank}(\nabla G_x)=m$ at $q(x)$, then the proof would be concluded by applying \cref{thm_implicit_function}.
		Since the $\varphi_j$'s are a convex combination, it suffices to show that the Riemannian Hessians $\nabla^2 \frac{1}{2} r_j^2 \succ 0$ if $\varphi_j(y_j) > 0$.
	
		A close inspection of Karcher's result \cite[Theorem 1.2]{karcher} reveals that this is the case under the assumptions of \cref{thm_rtsm_unique}.
		Indeed, Karcher shows in equation (1.2.2) in \cite{karcher} that in manifolds with all sectional curvatures negative, we have
		\(
		 \nabla^2 \frac{1}{2} r_j^2 \succcurlyeq \mathbf{g} \succ 0,
		\)
		where $\mathbf{g}$ is the Riemannian metric; this is \cite[Lemma 6.2.5]{Petersen}. Hence, the claim holds when $K=0$ in \cref{thm_rtsm_unique}.
		For manifolds with positive sectional curvatures bounded by $K > 0$, a much more advanced argument is required to obtain (1.2.3) in \cite{karcher}. Essentially, Karcher exploits $\nabla^2 \frac{1}{2} r_j^2 = \mathrm{d} r_j^2 + r_j \nabla^2 r_j$ (by \cite[Exercise 2.5.6]{Petersen}) and Rauch's comparison theorem \cite[Theorem 6.4.3]{Petersen} applied to $\nabla^2 r_j$ to yield
		\(
		 \nabla^2 \frac{1}{2} r_j^2 \succcurlyeq \sqrt{K} \mathrm{cotan}(\sqrt{K} r_j) \mathbf{g}.
		\)
		Since the cotangent is strictly positive for $0 \le r_j(p) < \frac{\pi}{2}$, it follows that $\nabla^2 \frac{1}{2} r_j^2 \succ 0$ if $r_j < \frac{\pi}{2 \sqrt{K}}$. As in the proof of C3 in \cref{thm_rtsm_unique}, assumption A3 implies that the distance from $f(x)$ to each $\widehat{y}_j$ is bounded by $5 \epsilon$. By \cite[Theorem 1.2]{karcher}, the minimum $q(x)$ of \cref{eqn_mtsm} will be attained in the interior of the geodesic ball of radius $5\epsilon$ centered at $f(x)$. The maximum distance $r_j(q(x))$ from $q(x)$ to the $\widehat{y}_j$'s is then $10\epsilon < \min\{ \mathrm{inj}(p_j^*), \frac{\pi}{2 \sqrt{k}}\} \le \frac{\pi}{2 \sqrt{K}}$ by the triangle inequality. This shows that $\nabla \frac{1}{2} r_j^2(q(x)) \succ 0$ for all $j$ such that $\varphi_j(\widehat{y}_j) > 0$ and concludes the proof.
		\end{proof}

\bibliographystyle{siamplain}
\bibliography{reference_new}

\end{document}